\def\fCenter{{\mbox{$\hspace{0.5mm}\GD$}}}
\newcommand{\Ax}{\Axiom}
\newcommand{\RL}[1]{\RightLabel{\footnotesize{#1}}}
\newcommand{\Un}{\UnaryInf}
\newcommand{\Bn}{\BinaryInf}
\newcommand{\Dis}{\DisplayProof}
\newcommand{\AxC}{\AxiomC}
\newcommand{\UnC}{\UnaryInfC}
\newcommand{\BnC}{\BinaryInfC}
\newcommand{\mypar}[1]{\medskip\noindent\textbf{#1.}}
\newcommand{\PA}{\textnormal{\textsf{PA}}\xspace}
\newcommand{\KF}{\textnormal{\textsf{KF}}\xspace}
\newcommand{\KFcs}{\ensuremath{\textnormal{\textsf{KF}}_\textnormal{\textsf{cs}}}\xspace}
\newcommand{\KFcp}{\ensuremath{\textnormal{\textsf{KF}}_\textnormal{\textsf{cp}}}\xspace}
\newcommand{\KFs}{\ensuremath{\textnormal{\textsf{KF}}_\textnormal{\textsf{S}}}\xspace}
\newcommand{\KFst}{\ensuremath{\KF_\star}\xspace}
\newcommand{\KFint}{\ensuremath{\textnormal{\KF}^\textnormal{\textsf{int}}}\xspace}
\newcommand{\KFintcs}{\ensuremath{\textnormal{\textsf{KF}}^\textnormal{\sf int}_\textnormal{\textsf{cs}}}\xspace}
\newcommand{\KFintcp}{\ensuremath{\textnormal{\textsf{KF}}^\textnormal{\sf int}_\textnormal{\textsf{cp}}}\xspace}
\newcommand{\KFints}{\ensuremath{\textnormal{\textsf{KF}}^\textnormal{\sf int}_\textnormal{\textsf{S}}}\xspace}
\newcommand{\KFintst}{\ensuremath{\KFint_\star}\xspace}
\newcommand{\PKF}{\textnormal{\textsf{PKF}}\xspace}
\newcommand{\PKFcs}{\ensuremath{\textnormal{\textsf{PKF}}_\textnormal{\textsf{cs}}}\xspace}
\newcommand{\PKFcp}{\ensuremath{\textnormal{\textsf{PKF}}_\textnormal{\textsf{cp}}}\xspace}
\newcommand{\PKFs}{\ensuremath{\textnormal{\textsf{PKF}}_\textnormal{\textsf{S}}}\xspace}
\newcommand{\PKFst}{\ensuremath{\PKF_\star}\xspace}
\newcommand{\FDE}{\textnormal{\textsf{FDE}}\xspace}
\newcommand{\K}{\ensuremath{\textnormal{\textsf{K}}\bo{3}}\xspace}
\newcommand{\LP}{\textnormal{\textsf{LP}}\xspace}
\newcommand{\KS}{\ensuremath{\textnormal{\textsf{KS}}\bo{3}}\xspace}
\newcommand{\CL}{\textnormal{\textsf{CL}}\xspace}
\newcommand{\then}{\rightarrow}
\newcommand{\sse}{\leftrightarrow}
\newcommand{\Then}{\Rightarrow}
\newcommand{\Sub}{\subseteq}
\newcommand{\cor}[1]{\ulcorner{#1}\urcorner}
\newcommand{\ang}[1]{\langle{#1}\rangle}
\newcommand{\ert}[1]{\lvert{#1}\rvert}
\DeclareMathOperator*{\mvee}{\text{\raisebox{0.25ex}{\scalebox{0.7}{$\bigvee$}}}}
\DeclareMathOperator*{\mwedge}{\text{\raisebox{0.25ex}{\scalebox{0.7}{$\bigwedge$}}}}
\newcommand{\lpa}{\ensuremath{\mathcal{L}_{\PA}}}
\newcommand{\lpat}{\ensuremath{\mathcal{L}_{T}}\xspace}
\newcommand{\Sent}{\textnormal{\textsf{Sent}}}
\newcommand{\Fml}{\textnormal{\textsf{Fml}}}
\newcommand{\Term}{\textnormal{\textsf{Term}}}
\newcommand{\ClTerm}{\textnormal{\textsf{ClTerm}}}
\newcommand{\subst}{\textnormal{\textsf{sub}}}
\newcommand{\num}{\textnormal{\textsf{num}}}
\newcommand{\Var}{\textnormal{\textsf{Var}}}
\newcommand{\val}{\textnormal{\textsf{val}}}
\newcommand{\Bew}{\textnormal{\textsf{Bew}}}
\newcommand{\negat}{\ensuremath{\textnormal{{\sf At}}^-}}
\newcommand{\har}{\ensuremath{\negthickspace\upharpoonright}\xspace}
\newcommand{\ud}[1]{\d{$#1$}}
\newcommand{\bo}[1]{\mathsf{#1}}
\newcommand{\GG}{\textnormal{\textsf{GG}}\xspace}
\newcommand{\Cons}{\textnormal{\textsf{Cons}}\xspace}
\newcommand{\Comp}{\textnormal{\textsf{Comp}}\xspace}
\newcommand{\Dmc}{\ensuremath{\mathcal{D}}\xspace}
\newcommand{\Rmc}{\ensuremath{\mathcal{R}}\xspace}
\newcommand{\Isf}{\textnormal{\textsf{I}}\xspace}
\newcommand{\Ssf}{\textnormal{\textsf{S}}\xspace}
\newcommand{\GD}{\ensuremath{\Gamma\Then\Delta}\xspace}
\newcommand{\GDv}{\ensuremath{\Gamma(\vec{x})\Then\Delta(\vec{y})}\xspace}
\newcommand{\GDp}{\ensuremath{\Gamma\Then\Delta'}\xspace}
\newcommand{\GpD}{\ensuremath{\Gamma'\Then\Delta}\xspace}
\newcommand{\GpDp}{\ensuremath{\Gamma'\Then\Delta'}\xspace}
\newcommand{\TGD}{\ensuremath{T\cor{\Gamma} \Then T\cor{\Delta}}\xspace}
\newcommand{\TGDv}{\ensuremath{T\cor{\Gamma(\dot{\vec{x}})} \Then T\cor{\Delta(\dot{\vec{y}})}}}
\newcommand{\TI}{\ensuremath{\textnormal{\textsf{TI}}}\xspace}
\newcommand*\bigcdot{\mathpalette\bigcdot@{.8}}
\newcommand*\bigcdot@[2]{\mathbin{\vcenter{\hbox{\scalebox{#2}{$\m@th#1\bullet$}}}}}
\newcommand\ztag[1]{%
	\def\@currentlabel{#1}%
	\gdef\tmp{%
	\addtocounter{equation}{-1}%
	\def\theequation{#1}}%
	\aftergroup\aftergroup\aftergroup\aftergroup\aftergroup\aftergroup
	\aftergroup\aftergroup\aftergroup\aftergroup\aftergroup\aftergroup
	\aftergroup\aftergroup\aftergroup\aftergroup\aftergroup\aftergroup
	\aftergroup\aftergroup\aftergroup\aftergroup\aftergroup\aftergroup
	\aftergroup\aftergroup\aftergroup\aftergroup\aftergroup\aftergroup
	\aftergroup
	\tmp
}
\theoremstyle{plain}
\newtheorem{definition}{Definition}[section]
\theoremstyle{plain}
\newtheorem{lemma}[definition]{Lemma}
\theoremstyle{plain}
\theoremstyle{plain}
\newtheorem{corollary}[definition]{Corollary}
\theoremstyle{plain}
\newtheorem{proposition}[definition]{Proposition}
\theoremstyle{plain}
\newtheorem{fact}[definition]{Fact}
\theoremstyle{plain}
\newtheorem{remark}[definition]{Remark}
\theoremstyle{remark}
\theoremstyle{plain}
\theoremstyle{definition}
\title{\KF, \PKF, and Reinhardt's Program}
\author{Luca Castaldo\thanks{luca.castaldo@bristol.ac.uk} \& Johannes Stern\thanks{johannes.stern@bristol.ac.uk}\\University of Bristol}
\date{}	
\begin{document}

\maketitle              
%

\begin{abstract}
\label{abstract}

In ``Some Remarks on Extending an Interpreting Theories with a Partial Truth Predicate'' \cite{reiRemarks} famously proposed an instrumentalist interpretation of the truth theory Kripke-Feferman (\KF) in analogy to Hilbert's program. Reinhardt suggested to view \KF as a tool for generating ``the significant part of \KF'', that is, as a tool for deriving sentences of the form $T\cor\varphi$. The constitutive question of \textit{Reinhardt's program} was whether it was possible ``to justify the use of nonsignificant sentences entirely within the framework of significant sentences''? This question was answered negatively by \cite{hahAxiomatizing} but we argue that under a more careful interpretation the question may receive a positive answer. To this end, we propose to shift attention from \KF-provably true \textit{sentences} to \KF-provably true \textit{inferences}, that is, we shall identify the significant part of \KF with the set of pairs $\ang{\Gamma, \Delta}$, such that \KF proves that if all members of $\Gamma$ are true, at least one member of $\Delta$ is true. In way of addressing Reinhardt's question we show that the provably true inferences of \KF coincide with the provable sequents of the theory Partial Kripke-Feferman (\PKF).

\end{abstract}

\allowdisplaybreaks
\section{Introduction}
\label{sec:Introduction}
	Kripke's theory of truth \citep{kriOutline} is a cornerstone of contemporary research on truth and the semantic paradoxes. The theory provides us with a strategy for constructing, that is defining, desirable interpretations of a self-applicable truth predicate, so-called fixed points. These fixed points can serve as interpretations of the truth predicate within non-classical models of the language, as in Kripke's original article, but can also be used in combination with classical models, so called closed-off models.\footnote{This was also suggested by Kripke \citep[cf.][p.715]{kriOutline}.} 
\cite{fefReflecting} devised an elegant axiomatic theory of the Kripkean truth predicate of these closed-off fixed-point models. The theory is known as Kripke-Feferman (\KF) and is still one of the most popular classical axiomatic truth theories in the literature. Nonetheless, \KF displays a number of unintended and slightly bizarre features, which it inherits from the behavior of the truth predicate in the closed-off fixed-point models. While in the non-classical fixed-point models the truth predicate is transparent, i.e., $\varphi$ and $T\cor\varphi$ will always receive the same semantic value, this no longer holds in the closed-off models. Rather for each closed-off model there will be sentences $\varphi$, e.g.~the Liar sentence, such that either $\varphi$ and $\neg T\cor\varphi$ will be true in the model, or $\neg\varphi$ and $T\cor\varphi$  will be true in the model. As a consequence one can prove this counterintuitive disjunction in \KF for the Liar sentence $\lambda$, i.e.
	\begin{flalign*}
		&(\ast)	&&	\mathsf{KF}\vdash(\lambda\wedge\neg T\cor\lambda)\vee(\neg\lambda\wedge T\cor\lambda).	&&
	\end{flalign*}
Since the transparency of truth seems to be one of the basic characteristica of the truth predicate, the aforementioned asymmetry puts the idea of understanding the closed-off models as suitable models of an intuitively acceptable truth predicate under some stress and alongside casts doubt on \KF as an acceptable theory of truth. However, reasoning within the non-classical logic of the Kripkean fixed-points seems a non-trivial affaire or, as Feferman would have it, ``{\it nothing like sustained ordinary reasoning can be carried on}'' in these non-classical logics \citep[p.~95]{fefToward}. Giving up on \KF thus hardly seems a desirable conclusion.

In reaction to the counterintuitive consequences of \KF, \cite{reiRemarks85, reiRemarks} proposed an instrumentalist interpretation of the theory in analogy to Hilbert's program. Famously, Hilbert proposed to justify number theory, analysis and even richer mathematical theories by finitary means. Without entering into Hilbert-exegesis, the main idea was of course to provide consistency proofs for these mathematical theories in a finitistically acceptable metatheory. From a finitist perspective this would turn the mathematical theories into useful tools for producing mathematical truths. But the fate of Hilbert's program, at least on its standard interpretation, is well known: G\"odel's incompleteness theorems are commonly thought to be the program's coffin nail.  Nonetheless \cite{reiRemarks85, reiRemarks} was optimistic that his program had greater chances of success.\footnote{On p.~225 \cite{reiRemarks} writes:
\begin{quote}``I would like to suggest that the chances of success in this context, where the interpreted or significant part of the language includes such powerful notations as truth, are somewhat better than in Hilbert's context, where the contentual part was very restictred.''\end{quote}} Reinhardt proposed to view \KF as a tool for deriving Kripkean truths in the same way Hilbert viewed, say, number theory as a tool for deriving mathematical truths. A Kripkean truth is a sentence that is true from the perspective of the Kripkean fixed-points: if $\mathsf{KF}\vdash T\cor\varphi$ ($T\cor{\neg\varphi}$), then $\varphi$ is true (false) in all non-classical fixed-point models, that is, we are guaranteed that $\varphi$ receives a semantic value from the perspective of Kripke's theory of truth. This is not a general feature of the theorems of \KF but peculiar to those sentences that \KF proves true (false). The latter sentences Reinhardt called ``{\it the significant part of} \KF'' \citep[p.~219]{reiRemarks} and labelled the set of \KF-significant sentences $\mathsf{KFS}:=\{\varphi\mid\mathsf{KF}\vdash T\cor\varphi\}$.\footnote{$\mathsf{KFS}$ is sometimes also called the {\it inner logic} of KF \citep[cf.][p.638]{hahAxiomatizing}. We decided to stick with Reimhardt's original terminology.} In light of this terminology the constitutive question of Reinhardt's program is whether it was possible ``{\it to justify the use of nonsignificant sentences entirely within the framework of significant sentences}'' \citep[p.~225]{reiRemarks}?

But what would such a successful instrumentalist interpretation of \KF and the use of non-significant sentences amount to? Reinhardt himself is scarce on the exact details, however, at the end of \cite{reiRemarks85} he asks the following question:
	\begin{quote}
		If $\mathsf{KF}\vdash T\cor\varphi$ is there a \KF-proof $$\varphi_1,\ldots,\varphi_n,T\cor\varphi$$ such that for each $1\leq i\leq n$, $\mathsf{KF}\vdash T\cor{\varphi_i}$. \citep[cf.~][p.~239]{reiRemarks85}\footnote{%
To be precise, \cite{reiRemarks85} asked whether there was a \KF-proof $\varphi_1,\ldots,\varphi_n,\varphi$ rather than a \KF-proof $\varphi_1,\ldots,\varphi_n,T\cor\varphi$. But this presupposes that all \KF-provably true sentences are also theorems of \KF. While this is true in the variants of \KF Reinhardt considers, this is not the case in all versions of \KF discussed in the literature. However, all remarks concerning our version of the question generalize to Reinhardt's original question.
}
	\end{quote} 
If we can answer this question positively it seems that we can justify each Kripkean truth provable in \KF by appealing solely to the significant fragment of \KF: even though we have reasoned in \KF, each step of our reasoning is part of {\sf KFS} and  hence ``remains within the framework of significant sentences''. This interpretation of Reinhardt's program is adopted by \cite{hahAxiomatizing} who called the question {\sc Reinhardt's Problem}. Unfortunately, as \cite{hahAxiomatizing} convincingly argue, if understood in this way the instrumentalist interpretation of \KF will fail. We refer to \cite{hahAxiomatizing} for details but, in a nutshell, the reason for this failure is that the truth-theoretic axioms of \KF will not be true in the non-classical fixed-point models and hence not be part of {\sf KFS}, e.g., if $\varphi_i:=\forall x ( \Sent(x) \then (T(x) \sse T(\ud\neg\ud\neg x) ) )$,\footnote{%
	See \S\ref{sec:Preliminaries} for details on notation.
}
then $\varphi_i\not\in \mathsf{KFS}$. Indeed, Halbach and Horsten take this to show ``{\it that Reinhardt's analogue of Hilbert's program suffers the same fate as Hilbert's program}'' \citep[p.~684]{hahAxiomatizing}.

However, we think that this conclusion is premature and argue that, to the contrary, if suitably understood Reinhardt's program can be deemed successful. Our key point of contention is that \cite{hahAxiomatizing}, arguably following Reinhardt, employ the perspective of classical logic 
when theorizing about the significant part of \KF. But the logic of the significant part of \KF is not classical logic but the logic of the non-classical fixed-point models, that is, a non-classical logic. This observation has two interrelated consequences for Reinhardt's program. First, contra Reinhardt and, Halbach and Horsten we should not identify the significant part of \KF exclusively with the set of significant \textit{sentences}. Rather it also seems crucial to ask which {\it inferences} are admissible within the significant part of \KF. Of course, in classical logic this difference collapses due to the deduction theorem but not so in non-classical logics. For example, the three-valued logic \textit{Strong Kleene}, \K, has no logical truth, but many valid inferences---if, in this case, we were to focus only on the theorems of the logic, there would be no logic to discuss. Moreover, since the significant sentences can be retrieved from the significant inferences, that is the provably true inferences, we should focus on the latter rather than the former in addressing {\sc Reinhardt's Problem}.\footnote{%
	The provable true sentences can be viewed as inferences where the truth of the sentence follows from an empty hypothesis.
}
To this end, it is helpful to conceive of \KF as formulated in a two-sided sequent calculus rather than a Hilbert-style axiomatic system. Let $\Gamma,\Delta$ be finite sets of sentences and let $T\cor\Gamma$ be short for $\{T\cor\gamma\mid\gamma\in\Gamma\}$. The admissible inferences of the significant part of \KF, which we label {\sf KFSI}, can then be defined as follows:\footnote{%
	Notice that moving to a two-sided sequent formulation of \KF is not essential. Due to the deduction theorem we can also define {\sf KFSI} be appeal to \KF formulated in an axiomatic Hilbert-style calculus. In this case, the definition would amount to 
	$$\mathsf{KFSI}:=\{\left\langle\Gamma,\Delta\right\rangle\mid\mathsf{KF}\vdash \mwedge T\cor\Gamma\rightarrow \mvee T\cor\Delta\}.$$
}
	\begin{align*}
		\mathsf{KFSI}:=\{\left\langle\Gamma,\Delta\right\rangle\mid\mathsf{KF}\vdash T\cor\Gamma\Rightarrow T\cor\Delta\}
	\end{align*}
 
Second, {\sc Reinhardt's Problem}, according to the formulation of \cite{hahAxiomatizing}, which admittedly was inspired by Reinhardt's \citeyearpar{reiRemarks85} original question, conceives of {\sf KF}-proofs as sequences of theorems of \KF. But by focusing on sequences of theorems, we cannot fully exploit the significant part of {\sf KF}, that is, {\sf KFSI} for precisely the reasons \cite{hahAxiomatizing} used to rebut Reinhardt's program: while double negation introduction is clearly a member of {\sf KFSI}, proving this fact by a sequence of theorems would take us outside of {\sf KFS} since it would use the truth-theoretic axiom $\forall x ( \Sent(x) \then (T(x) \sse T(\ud\neg\ud\neg x) ) )$, which is not a member of the significant part of {\sf KF}. This suggest a reformulation of {\sc Reinhard's Problem} in terms of a notion of proof that focuses on inferences rather than theorems. To this end, it proves useful again to formulate \KF in a two-sided sequent calculus and to conceive of proofs as derivation trees, where each node of the tree is labeled by a sequent. As a matter of fact in this case we can distinguish between two versions {\sc Reinhard's Problem}:

	\begin{enumerate}
		\item For every \KF-theorem of the form $T\cor\varphi$, is there a \KF-derivation tree with root $\emptyset\Rightarrow T\cor\varphi$ such that for every node $d$ of the tree, $d\in\mathsf{KFSI}$?
		\item For every \KF-derivable sequent of the form $T\cor\Gamma\Rightarrow T\cor\Delta$, is there a \KF-derivation tree with root $T\cor\Gamma\Rightarrow T\cor\Delta$ such that for every node $d$ of the tree, $d\in\mathsf{KFSI}$?
	\end{enumerate}

The first question is a reformulation of Halbach and Horsten's \citeyearpar{hahAxiomatizing} {\sc Reinhardt's Problem}. The second question, which we label {\sc Generalized Reinhardt Problem}, asks whether all provably true inferences can be justified by appealing to the significant inferences only. Arguably, to deem Reinhardt's program successful we need to give an affirmative answer to the {\sc Generalized Reinhardt Problem}. Otherwise, a proof of $T\cor\varphi$ could still rely on inferences that, whilst part of {\sf KFSI}, cannot themselves be justified by appealing only to the significant inferences of \KF. Perhaps surprisingly, we shall show that an affirmative answer to the {\sc Generalized Reinhardt Problem} can be given. It follows that on this more careful formulation Reinhardt's program can be deemed successful.

Arguably, one may still take issue with this conclusion and argue that our answer to the {\sc Generalized Reinhardt Problem} is at best a partial completion of Reinhardt's program: what is still required is an independent axiomatization of the significant part of \KF, for this would prove \KF dispensable. We postpone a discussion of this view to the conclusion. Rather we will now take a fresh look at the question of an independent axiomatization, which Halbach and Horsten called {\sc Reinhardt's Challenge} \citep[p.~689]{hahAxiomatizing}. This will prove instrumental in answering the {\sc Generalized Reinhardt Problem}. \cite{reiRemarks85} asked for an independent axiomatization of the significant part of \KF. More precisely, \citep[p.~239]{reiRemarks85} asked:
	\begin{itemize}
		\item[a)]``Is there an axiomatization of $\{\sigma\mid\mathsf{KF}\vdash T\cor\sigma\}$ which is natural and formulated entirely within the domain of significant sentences,\ldots.''
		\item[b)]``Similarly for the relation $\Gamma\vdash_{S}\sigma$ defined by $\mathsf{KF}+\{T\cor\gamma\mid\gamma\in\Gamma\}\vdash T\cor\sigma$.''
	\end{itemize}
\cite{hahAxiomatizing} proposed their theory Partial Kripke-Feferman (\PKF) in way of answering to {\sc Reinhardt's Challenge}.  \PKF is formulated in a non-classical, two-sided sequent calculus and thus fits neatly with our observation that one should focus on the provably true inferences of \KF rather than the provably true sentences. Moreover, \PKF is arguably a natural axiomatization of Kripke's theory of truth. However, \cite{hahAxiomatizing} observed that there are sentences $\varphi$ such that $\mathsf{KF}\vdash T\cor\varphi$ but $\mathsf{PKF}\not\vdash\varphi$, which led them to conclude that {\sc Reinhardt's Challenge} cannot be met. The reason for this asymmetry is due to the difference in proof-theoretic strength of \KF and \PKF: while \KF proves transfinite induction for ordinals below $\varepsilon_0$, \PKF only proves transfinite induction for ordinals smaller than $\omega^\omega$. As a consequence, there will be arithmetical sentences that \KF proves true that \PKF cannot prove. The story does not end there however. First, as \cite{hanCosts} observe, the discrepancy between \KF and \PKF arises only if the rule of induction is extended beyond the arithmetical language, that is, if we restrict induction to the language of arithmetic---call the resulting theories $\mathsf{KF}\har$ and $\mathsf{PKF}\har$---then $\mathsf{KF\har}\, \vdash T\cor\varphi$, if and only if, $\mathsf{PKF\har}\,\vdash\varphi$. This highlights that the asymmetry between \KF and \PKF is not due to the truth-specific principles but the amount of induction that is assumed in the respective theories. 

Second, corroborating the latter observation, \cite{nicProvably} showed that the asymmetry between \KF and \PKF is indeed solely due to the amount of induction available within the respective theories: Nicolai shows that if transfinite induction up to $<\varepsilon_0$ is added axiomatically to \PKF---call this theory $\mathsf{PKF^+}$--- then $\mathsf{KF}\vdash T\cor\varphi$, if and only if, $\mathsf{PKF^+}\vdash\varphi$. Moreover, \cite{nicProvably} shows that independently of which version of induction is assumed in \KF there will be a suitable \PKF-style theory, which will have exactly the provably true sentences of the relevant \KF-style theory as theorems. Nicolai took these results to ``{\it partially [accomplish] a variant of a program sketched by Reinhardt}'' \citep[cf.~][p.~103]{nicProvably}.

However, the work by Halbach and Nicolai provides at best a positive answer to Question a), it does not---at least not immediately---yield an answer to Question b). Indeed, it seems as if, despite working in a non-classical, two-sided sequent calculus \cite{hahAxiomatizing} have largely neglected Question b) of {\sc Reinhardt's Challenge} and so have subsequent publications on this issue. Indeed the version of \PKF originally proposed by \cite{hahAxiomatizing} failed to yield a positive answer to Question b) for rather banal reasons---even for theories with restricted induction: the version of \KF \cite{hahAxiomatizing} consider assumes the truth predicate to be consistent, which, as we shall explain in due course, means that the logic of {\sf KFSI} is \K. But Halbach and Horsten formulate \PKF in symmetric strong Kleene logic \KS and as a consequence $\mathsf{KF}\vdash T\cor\varphi,T\cor{\neg\varphi}\Rightarrow T\cor\psi$ while $\mathsf{PKF}\not\vdash\varphi,\neg\varphi\Rightarrow\psi$. The main technical contribution of this paper is to clarify the situation and to show how, building on Nicolai's \citeyearpar{nicProvably} work, a positive answer to Question b) of {\sc Reinhardt's Challenge} can be provided. To this end, we show how to pair the different variants of \KF with a suitable \PKF-style theory such that the provable sequents of the latter theory constitute exactly the significant inferences of the former theory. Moreover, it turns out that once we have an independent axiomatization of {\sf KFSI}, the {\sc Generalized Reinhardt Problem} can be answered rather immediately: it is easy to show that if a sequent $\Gamma\Rightarrow\Delta\in\mathsf{KFSI}$, then there will be a \KF-derivation tree such that each node of the tree can be derived in $\mathsf{PKF^+}$. But the provable sequents of $\mathsf{PKF^+}$ constitute exactly the significant inferences of \KF, that is, {\sf KFSI}---every node of the \KF-derivation is a member of {\sf KFSI}. The derivation remains within the significant part of \KF.

\subsection{Plan of the paper}
The paper starts by fixing some basic terminology and notation. More specifically, Section \ref{sec:Preliminaries} introduces the language and the logical systems underlying \PKF and its variants. That is, we introduce the logics \FDE, \KS, \K and \LP. In the next section, Section \ref{sec:KFandPKF}, we introduce the relevant families of \KF- and \PKF-like theories and observe some basic properties of these \PKF-systems. In Section \ref{sec:InternalLogic} we prove the central technical results of this paper. We show that for each \KF-like theory we can find a \PKF-counterpart such that the latter is an independent axiomatization of the significant inference of the former. In other words, we show that the set of pairs $\ang{\Gamma, \Delta}$ such that \TGD is derivable in a \KF-like theory coincides with the set of pairs $\ang{\Gamma, \Delta}$ such that \GD is derivable in a corresponding \PKF-like theory. It turns out a positive answer to {\sc Generalized Reinhardt Problem} is but an immediate corollary of the existence of such an independent axiomatization.


\section{Preliminaries}
\label{sec:Preliminaries}
\subsection{Language and notation} $\lpat$ denotes the language of first-order \emph{Peano arithmetic} (\PA) extended by a unary predicate $T$. $\lpa:=\lpat \backslash\{T\}$ is the $T$-free fragment of $\lpat$. Terms and formulae are generated in the usual way. By an \lpat-expression we mean a term or a formula of \lpat. $\bo n$ is the numeral corresponding to the number $n\in\omega$; $=$ is the formal identity symbol of \lpat. We fix a canonical \emph{G\"odel numbering} of $\lpat$-expressions. If $e$ is an $\lpat$-expression, the G\"odel number (= gn) of $e$ is denoted by $\#e$ and $\cor{e}$ is the term representing $\#e$ in $\lpa$. We introduce some primitive recursive relations, in practice working in a definitional extension of \PA:
		\begin{IEEEeqnarray*}{L}
			\Term(x) \ (\ClTerm(x)) := x \text{ is the gn of a (closed) term}; \\
			\Var(x) := x \text{ is the gn of variable}; \\
			\Fml_n(x) \ (\Sent(x)) := x \text{ is the gn of a formula with at most } n \ (0) \text{ free distinct variables}; \\
			\textsf{Eq}(x) := x \text{ is the gn of an equality between closed terms}; \\
			\textsf{Ver}(x) := x \text{ is the gn of true closed equality}.
		\end{IEEEeqnarray*}
We also assume to have standard representations of the following primitive recursive operations on G\"odel numbers:
		\begin{IEEEeqnarray*}{LL}
			\ud\land:\#\varphi,\#\psi \mapsto \#(\varphi\land\psi) \hspace{50mm} & \ud\neg:\#\varphi\mapsto\#(\neg\varphi)  \\
			\ud\forall:\#v_k,\#\varphi \mapsto \#(\forall v_k\varphi) & \ud=:\#t, \#s\mapsto\# (t= s) \\
			\ud T:\#t \mapsto \#T(t) \hspace{50mm} & \\
			\num:n \mapsto \#\bo n \ (\text{the gn of the } n \text{-th numeral}) & \subst:\#e,\#t,\#v_k \mapsto \#e[t/v_k]
		\end{IEEEeqnarray*}
$e[t/v_k]$ is the expression obtained from an expression $e$ by replacing each free occurrence of $v_k$ by a term $t$. We occasionally write $e(t)$, when it is clear which variable is being substituted. We also have a recursive function $\val(x)$ such that $\val(\cor t) = t$ for closed terms $t$. We write $\ud\exists a.b$ instead of $\ud\exists(a, b)$. We often abbreviate $\num(x)$ with $\dot{x}$. We write $\varphi(\vec{v})$ to denote a formula $\varphi$ whose free variables are contained in $\vec{v}$, and $\cor{\varphi(\dot{x})}$ for $\subst(\cor{\varphi(v)}, \num(x), \cor{v})$. This definition extends to the case of multivariables in the obvious way, and we write $\cor{\varphi(\dot{\vec{x}})}$ for  $\cor{\varphi(\dot{x_1}, \dots, \dot{x_n})}$.  The G\"odelnumbering is canonical, so in particular we require that the following are provable in (a fragment of) \PA:
		\begin{IEEEeqnarray*}{LC}
			\PA\vdash \val(\dot{x}) = x \land \ClTerm(\dot{x}) & \hspace*{50mm}  \\
			\PA\vdash \Fml_1(x) \then\forall z \Sent(x[\dot{z}/v]) \\
			\PA\vdash \ClTerm(x) \land \ClTerm(y) \then (\textsf{Ver}(x \ud= y) \sse \val(x) = \val(y))
		\end{IEEEeqnarray*}
\mypar{Terminology and notation for Gentzen-systems} 
\\
A sequent is a pair $\GD$ of finite sets of \lpat-formulae.
For a finite set of formulae $\Gamma(\vec{x}):=\{\varphi_1(\vec{x}), \dots, \varphi_n(\vec{x})\}$ with free variables contained in $\vec{x}$, $\neg\Gamma(\vec{x})$ denotes the set of negated formulae of $\Gamma$, i.e. $\{\neg\varphi_1(\vec{x}), \dots, \neg\varphi_n(\vec{x})\}$, and $\bigwedge\Gamma(\vec{x})$ denotes the iterated conjunction $\varphi_1(\vec{x}) \land \dots \land \varphi_n(\vec{x})$. If $\Gamma=\emptyset$ we identify $\bigwedge\Gamma$ with $\bo{0 = 0}$. For $t$ free for $v$ in $\Gamma$ (i.e. $t$ is free for $v$ for all members $\varphi_1, \dots, \varphi_n$ of $\Gamma$), we write $\Gamma[t/v]$ for $\{\varphi_1[t/v], \dots, \varphi_n[t/v]\}$. For $\Gamma$ a finite set of \lpat-sentences, we let $T\cor\Gamma:=\{T\cor\gamma\mid\gamma\in\Gamma\}$.
\\
A \textit{derivation} of a sequent \GD is a tree with nodes labelled by sequents. Given a derivation \Dmc and a node \GD of \Dmc, call it $d$, we write $\Gamma'\ert{d}\Delta'$ for $\Gamma', \GD, \Delta'$.
\\
The \textit{height} of a derivation \Dmc is the maximum \textit{lenght} of the branches in the tree, where the lenght of a branch is the number of its nodes minus 1.
\vspace{2mm}
\\
In a rule of inference
	\begin{itemize}
		\item[$\bigcdot$] formulae in $\Gamma, \Delta$ are called \textit{side formulae}, or \textit{context},
		\item[$\bigcdot$] the formulae not in the context in the conclusion are called \textit{principal formulae},
		\item[$\bigcdot$] the formulae in the premises from which the conclusion is derived (i.e. the formulae in the premises not in the context) are called \textit{active formulae}.
	\end{itemize}
A \textit{literal} is an atomic formula or the negation of an atomic formula. The {\it cut rank of a formula} which is eliminated in a cut-rule is the positive complexity of the formula. The supremum of the cut ranks of a derivation \Dmc is called the {\it cut rank} of \Dmc.

\subsection{Sequent calculi for \FDE and some of its extensions}
In this section we introduce the various logics underlying the systems of truth employed in the paper. We start with the two-sided sequent calculus of {\sf F}irst {\sf D}egree {\sf E}ntailment (\FDE). For a general overview of the different non-classical logics employed in this section see \cite{priIntroduction}.

\begin{definition}[\FDE] The logic of \FDE consists of the following axioms and rules.
\vspace*{4mm}
	\begin{IEEEeqnarray*}{CCCC}
		 & \hspace*{40mm} \varphi, \GD, \varphi \hspace{28mm} & 
		\frac{ \GD, \varphi \qquad  \varphi, \GpDp }{ \Gamma', \GD, \Delta' } \hspace{6mm} (\textnormal{Cut}) & \hspace{10mm}
	\end{IEEEeqnarray*}
	\begin{IEEEeqnarray*}{RCCL}
		(\land$\textnormal{L}$) \hspace{10mm} & \frac{ \varphi, \psi, \GD }{ \varphi \land \psi, \GD } \hspace{20mm} &
		\frac{ \GD, \varphi \qquad \GD, \psi }{ \GD, \varphi \land \psi } & (\land$\textnormal{R}$)
		\vspace{3mm}
		\\
		(\forall$\textnormal{L}$) \hspace{10mm} & \frac{ \varphi[t/v], \GD }{ \forall v \varphi, \GD } \hspace{20mm} &
		\frac{ \GD, \varphi[u/v] }{ \GD, \forall v \varphi } & (\forall$\textnormal{R}$)
		\vspace{3mm}
		\\
		(\neg\neg$\textnormal{L}$) \hspace{10mm} & \frac{ \varphi, \GD }{ \neg\neg\varphi, \GD } \hspace{20mm} &
		\frac{ \GD, \varphi }{ \GD, \neg\neg\varphi } & (\neg\neg$\textnormal{R}$)
		\vspace{3mm}
		\\
		(\neg\land$\textnormal{L}$) \hspace{10mm} & \frac{ \neg\varphi, \GD \qquad \neg\psi, \GD }{ \neg (\varphi \land \psi), \GD } \hspace{25mm} &
		\frac{ \GD, \neg\varphi, \neg\psi }{ \GD, \neg(\varphi \land \psi) } &(\neg\land$\textnormal{R}$)
		\vspace{3mm}
		\\
		(\neg\forall$\textnormal{L}$) \hspace{10mm} & \frac{ \neg\varphi[u/v], \GD }{ \neg\forall v \varphi, \GD } \hspace{20mm} &
		\frac{ \Gamma \Then \Delta, \neg\varphi[t/v] }{ \GD, \neg\forall v \varphi } & (\neg\forall$\textnormal{R}$)
	\end{IEEEeqnarray*}
Conditions of application: $\varphi$ literal in initial sequents; $u$ eigenparameter.
\end{definition}

Let
\vspace{2mm}
	\begin{center}
		\Axiom$ \fCenter, \varphi $
		\RightLabel{($\neg$L)}
		\UnaryInf$ \neg\varphi, \fCenter $
		\DisplayProof
		\hspace*{10mm}
		\Axiom$ \varphi, \fCenter $
		\RightLabel{($\neg$R)}
		\UnaryInf$ \fCenter, \neg\varphi $
		\DisplayProof
		\hspace*{10mm}
		\Axiom$ \psi, \fCenter $
		\Axiom$ \neg\psi, \fCenter $
		\RightLabel{(\GG)}
		\BinaryInf$ \varphi,\neg\varphi, \fCenter $
		\DisplayProof
	\end{center}
for $\varphi, \psi$ atomic.

\begin{definition} \hspace{50mm}
	\begin{itemize}
		\item[$\bigcdot$] Classical Logic, \CL, is the system given by \FDE without $\neg\circ$M, for $\circ\in\{\neg, \land, \forall\}, M\in\{L, R\}$, and with the addition of unrestricted \textnormal{($\neg$L)} and \textnormal{($\neg$R)}.
		\item[$\bigcdot$] Strong Kleene, \K, is the system $\FDE +\textnormal{($\neg$L)}$.
		\item[$\bigcdot$] Logic of Paradox, \LP, is the system \FDE $+$ \textnormal{($\neg$R)}.
		\item[$\bigcdot$] Kleene's Symmetric Logic, \KS, is the system $\FDE + (\GG)$.\footnote{
	For similar calculi defining the same logic see, for instance, \cite{scoCombinators} and \cite{blaPartial}.
}
	\end{itemize}
\end{definition}

We now extend the base logics with rules for identity. Let
\vspace{2mm}
	\begin{center}
		\Ax $t = t, \fCenter$
		\RL{Ref}
		\Un $\fCenter$
		\Dis
		\hspace*{15mm}
		\vspace{3mm}
		\\
		\Ax $\varphi(t), \fCenter$
		\RL{RepL}
		\Un $s=t, \varphi(s), \fCenter$
		\Dis
		\hspace{5mm}
		\Ax $\fCenter, \varphi(t)$
		\Ax $\fCenter, s=t$
		\RL{RepR}
		\Bn $\fCenter, \varphi(s)$
		\Dis
	\end{center}

for $\varphi$ literal.

\begin{definition} \hspace{50mm}
	\begin{itemize}
		\item[$\bigcdot$] $\CL_{=}$ is \CL $+$ \textnormal{(Ref)} $+$ \textnormal{(RepL)}
		\item[$\bigcdot$] $\FDE_{=}$ is \FDE $+$ \textnormal{(Ref)} $+$ \textnormal{(RepL)}.
		\item[$\bigcdot$] $\K_{=}$ is \K $+$ \textnormal{(Ref)} $+$ \textnormal{(RepL)}.
		\item[$\bigcdot$] $\LP_{=}$ is \LP $+$ \textnormal{(Ref)} $+$ \textnormal{(RepR)}.
		\item[$\bigcdot$] $\KS_{=}$ is \KS $+$ \textnormal{(Ref)} $+$ \textnormal{(RepL)}.
	\end{itemize}
\end{definition}

\begin{remark}\label{rm_Calculi} \hspace{50mm}
	\begin{itemize}
		\item[$\bigcdot$] (RepL) and (RepR) are equivalent over \FDE, and they both yield the replacement schema
	\[
		s = t, \varphi(s), \GD, \varphi(t)
	\]
		\item[$\bigcdot$] It can easily be shown that \FDE (\KS) and \textnormal{\sf BDM} (\textnormal{\sf SDM}), that is, the system(s) defined by \cite{nicProvably}, are equivalent. The system \KS, however, has the advantage of enjoying a cut elimination theorem.
		\item[$\bigcdot$] The reason for formulating $\K_=$ and $\KS_=$ with \textnormal{(RepL)}, and \LP with \textnormal{(RepR)} is to obtain a syntactic proof of full Cut elimination.\footnote{
	We remark that Cut elimination holds for the sequent calculi introduced above. The key observation is that the calculi defined in this article are designed so that if the cut formula is principal in both premises of Cut, then the complexity of $\varphi$ is >0, i.e. $\varphi$ cannot be a literal. In fact, In $\FDE_{=}, \K_{=}$, and $\KS_{=}$, there is no rule introducing a literal on the right---that is the reason why we formulated $\K_{=}$ and $\KS_{=}$ with (RepL), as they both have one rule introducing literals on the left, i.e. ($\neg$L) and (\GG), respectively---; in $\LP_{=}$ there is no rule introducing a literal on the left---that is the reason why we formulated $\LP_{=}$ via (RepR), as this calculus has one rule introducing literals on the right, i.e. ($\neg$R). Two points are worth emphasizing. The first is that, as far as we know, the rule \GG excluding the simultaneous occurrence of gaps and gluts is new and \KS is the first sequent calculus for symmetric Strong Kleene logic admitting a syntactic proof of Cut elimination. The second is that we are also not aware of Gentzen style calculi for first-order \FDE, \K, \LP, or \KS using so called geometric rules for identity---at least in the literature on truth and paradoxes, the calculi above might be the first axiomatizing identity with rules instead of axioms. A notable exception is \cite{picTruth}, however, Cut is not eliminable in the calculus defined.}
	\end{itemize}
\end{remark}


\section{\KF-like and \PKF-like theories}
\label{sec:KFandPKF}
This section introduces the \KF-like and \PKF-like truth theories. The theory Kripke-Feferman \KF was developed by Feferman \cite[cf.][]{fefReflecting} and further studied by, e.g., \cite{reiRemarks,McGTruth} and \cite{canNotes}. The theory Partial Kripke-Feferman (\PKF) may be seen as the non-classical counterpart to \KF. It was developed by \cite{hahAxiomatizing} and further studied by, e.g., \cite{hanCosts} and \cite{nicProvably}, who introduced the theory $\PKF^+$.\footnote{See \cite{halAxiomatic} for a presentation and discussion of both theories.} 

We first introduce different rules of induction employed in the formulation of the theories. To this end we fix a standard notation system of ordinals up to $\Gamma_0$.\footnote{See
for instance \cite{fefSystems}, \cite{pohProof}.
} 
We use $a, b, c \dots$ to denote the code of our notation system whose value is $\alpha, \beta, \gamma\dots\in On$ (with the exception of $\omega$ and $\varepsilon$-numbers, for which we use the ordinals themselves), and we use $\prec$ to denote the p.r. ordering defined on codes of ordinals. The expression $\forall z\prec y (\varphi[z/v])$ is short for $\forall z (\neg(\textsf{Ord}(z) \land \textsf{Ord}(y) \land z\prec y) \lor \varphi[z/v])$, where \textsf{Ord} represents the set of codes of ordinals. For $\alpha<\Gamma_0$ and a formula $\varphi(v)\in\lpat$ we let $\TI^{<\alpha}(\varphi)$ denote
	\begin{IEEEeqnarray}{c}
		\frac{ \forall z \prec y \ \varphi(z), \GD, \varphi(y) } { \GD, \forall x \prec a \ \varphi(x) } \ztag{$\TI^{<\alpha}$}
	\end{IEEEeqnarray}
We have the following induction schemata ($\varphi(v)\in\lpat$):
	\begin{IEEEeqnarray}{LCL}
		& \hspace{30mm} \frac{ \Gamma, \varphi(u) \Then \varphi(u'), \Delta }{ \Gamma, \varphi(\bo 0) \Then \varphi(t), \Delta } & \hspace{10mm} u\notin FV(\Gamma, \Delta, \varphi(\bo 0)) \ztag{\sf{IND}}
	\end{IEEEeqnarray}
	\begin{IEEEeqnarray}{C}
		\frac{\Fml_1(x), Tx[\dot u / v], \Gamma \Then \Delta, Tx[\dot u' / v]}{Tx[\num(\bo 0) / v], \Gamma \Then \Delta, Tx[\dot z / v]} \ztag{$\textsf{IND}^{\textsf{int}}$}
	\end{IEEEeqnarray}
where $u$ is an eigenvariable and $z$ is an arbitrary term.

We now introduce the basic truth principles employed in the systems of truth we discuss in the paper.

\begin{definition}[Truth Axioms]\label{df_TruthAxioms} The following truth-theoretic initial sequents are called truth axioms. \textnormal{\sf Reg1-2} are called \emph{regularity axioms}.
\allowdisplaybreaks
		\begin{IEEEeqnarray*}{RL}
			(T=) & \hspace{7mm} \ClTerm(x), \ClTerm(y), \val(x) = \val(y), \GD, T(x \ud= y) \\
					& \hspace{7mm} \ClTerm(x), \ClTerm(y), T(x \ud= y), \GD, \val(x) = \val(y) \nonumber \hspace{20mm} \\
%
%
			(T\neg=) & \hspace{7mm} \ClTerm(x), \ClTerm(y), \neg\big(\val(x) = \val(y)\big), \GD, T(\ud\neg (x \ud= y)) \hspace{20mm} \\
					    & \hspace{7mm} \ClTerm(x), \ClTerm(y), T(\ud\neg (x \ud= y)), \GD, \neg\big(\val(x) = \val(y)\big) \nonumber \hspace{20mm} \\
			(\neg T\neg) (i) & \hspace{7mm} \Sent(x), T(\ud\neg x), \GD, \neg T(x) \\
					   (ii) & \hspace{7mm} \Sent(x), \neg T(x), \GD, T(\ud\neg x) \nonumber  \\
			(T\Sent) & \hspace{7mm} Tx, \GD, \Sent(x) 
			\vspace{1mm}
			\\
			(\textnormal{\sf Reg1}) & \hspace{7mm} \Var(z), \ClTerm (y), \Sent(\ud\forall z.x), Tx[y/z], \GD, Tx[\dot{\val(y)}/z] \nonumber\footnotemark\\
			(\textnormal{\sf Reg2}) & \hspace{7mm} \Var(z), \ClTerm (y), \Sent(\ud\forall z.x), Tx[\dot{\val(y)}/z], \GD, Tx[y/z] \nonumber
		\end{IEEEeqnarray*}
\footnotetext{Notice that $\dot{\val(y)} := \num(\val(y))$.}
The following rules are called truth-rules:
	\vspace{2mm}
	\begin{center}
	\small
		\Ax$\ClTerm(x), T(\val(x)), \fCenter$
		\RL{$T$\textnormal{{\sf rp}L}}
		\Un$\ClTerm(x), T(\ud T x), \fCenter$
		\Dis
		\hspace{10mm}
		\Ax$\ClTerm(x), \fCenter, T(\val(x))$
		\RL{$T$\textnormal{{\sf rp}R}}
		\Un$\ClTerm(x), \fCenter, T(\ud T x)$
		\Dis
		\vspace{4mm}
		\\
		\AxC{$\ClTerm(x), T(\ud\neg\val(x)), \GD$}
		\AxC{$\ClTerm(x), \neg\Sent(\val(x)), \GD$}
		\RL{$T$\textnormal{{\sf nrp}L}}
		\BnC{$\ClTerm(x), T(\ud\neg\ud T x), \fCenter$}
		\Dis
		\vspace{2mm}
		\\
		\AxC{$\ClTerm(x), \GD, T(\ud\neg\val(x)), \neg\Sent(\val(x))$}
		\RL{$T$\textnormal{{\sf nrp}R}}
		\UnC{$\ClTerm(x), \GD, T(\ud\neg\ud T x)$}
		\Dis
		\vspace{4mm}
		\\
		\Ax$\Sent(x), T(x), \fCenter$
		\RL{$T$\textnormal{{\sf dn}L}}
		\Un$\Sent(x), T(\ud\neg\ud\neg x), \fCenter$
		\Dis
		\hspace{15mm}
		\Ax$\Sent(x), \fCenter, T(x)$
		\RL{$T$\textnormal{{\sf dn}R}}
		\Un$\Sent(x), \fCenter, T(\ud\neg\ud\neg x)$
		\Dis
		\vspace{4mm}
		\\
		\Ax$\Sent(x\ud\land y), T(x), T(y), \fCenter$
		\RL{$T$\textnormal{{\sf and}L}}
		\Un$\Sent(x\ud\land y), T(x\ud\land y), \fCenter$
		\Dis
		\vspace{2mm}
		\\
		\Ax$\Sent(x\ud\land y), \fCenter, T(x)$
		\Ax$\Sent(x\ud\land y), \fCenter, T(y)$
		\RL{$T$\textnormal{{\sf and}R}}
		\Bn$\Sent(x\ud\land y), \fCenter, T(x\ud\land y)$
		\Dis
		\vspace{4mm}
		\\
		\Ax$\Sent(x\ud\land y), T(\ud\neg x), \fCenter$
		\Ax$\Sent(x\ud\land y), T(\ud\neg y), \fCenter$
		\RL{$T$\textnormal{{\sf nand}L}}
		\Bn$\Sent(x\ud\land y), T(\ud\neg(x\ud\land y)), \fCenter$
		\Dis
		\vspace{2mm}
		\\
		\Ax$\Sent(x\ud\land y), \fCenter, T(\ud\neg x), T(\ud\neg y)$
		\RL{$T$\textnormal{{\sf nand}R}}
		\Un$\Sent(x\ud\land y), \fCenter, T(\ud\neg(x\ud\land y))$
		\Dis
		\vspace{4mm}
		\\
		\Ax$\Sent(\ud\forall v.x), \forall yT(x[\dot{y}/v]), \fCenter$
		\RL{$T$\textnormal{{\sf all}L}}
		\Un$\Sent(\ud\forall v.x), T(\ud\forall v.x), \fCenter$
		\Dis
		\hspace{15mm}
		\Ax$\Sent(\ud\forall v.x), \fCenter, \forall z T(x[\dot{z}/v])$
		\RL{$T$\textnormal{{\sf all}R}}
		\Un$\Sent(\ud\forall v.x), \fCenter, T(\ud\forall v.x)$
		\Dis
		\vspace{4mm}
		\\
		\Ax$\Sent(\ud\forall v.x), \neg\forall z\neg T(\ud\neg x[\dot{z}/v]), \fCenter$
		\RL{$T$\textnormal{{\sf nall}L}}
		\Un$\Sent(\ud\forall v.x), T(\ud\neg\ud\forall v.x), \fCenter$
		\Dis
		\hspace{15mm}
		\Ax$\Sent(\ud\forall v.x), \fCenter, \neg\forall z\neg T(\ud\neg x[\dot{z}/v])$
		\RL{$T$\textnormal{{\sf nall}R}}
		\Un$\Sent(\ud\forall v.x), \fCenter, T(\ud\neg\ud\forall v.x)$
		\Dis
	\end{center}
\end{definition}
\vspace{4mm}

The following definitions introduce the various \KF- and \PKF-style theories.

\begin{definition}[\KF] \KF is obtained from $\CL_=$ by adding sequents $\Then \varphi$ for $\varphi$ axiom of \PA, (\textnormal{\sf IND}), truth axioms and truth-rules of Df~\ref{df_TruthAxioms}, except the axiom ($\neg T\neg$).
\end{definition}

\begin{definition}[\KF-variants]\label{df_KFCluster} We introduce variants of \KF:
		\begin{itemize}
			\item[(i)] $\KFcs$ is obtained from \KF by adding \Cons, i.e., $\Sent(x), T(\ud\neg x), \GD, \neg T(x)$.
			\item[(ii)] $\KFcp$ is obtained from \KF by adding \Comp, i.e., $\Sent(x), \neg T(x), \GD, T(\ud\neg x)$.\footnote{%
				Note that \Cons is $(\neg T\neg)(i)$ and \Comp is $(\neg T\neg)(ii)$. Of course, over the nonclassical logics studied in this paper, $(\neg T\neg)$ does not imply that the truth predicate is consistent and complete. $(\neg T\neg)$ is just axiomatizing the well known property of fixed-point models that the anti-extension $A$ can be defined via the extension $E$ as $A:=\{\varphi\mid\neg\varphi\in E\}$.
			}
			\item[(iii)] $\KFs$ is obtained from \KF by adding
				\begin{IEEEeqnarray}{C}
					\Sent(x), \Sent(y), T(x), T(\ud\neg x), \GD, T(y), T(\ud\neg y) \ztag{\textnormal{{\sf GoG}}}
				\end{IEEEeqnarray}
		\end{itemize}
		Let $\KF_\star\in\{\KF, \KFcs, \KFcp, \KFs\}$. Then
		\begin{itemize}
			\item[(iv)] $\KF_\star\har$ is obtained from $\KF_\star$ by restricting \textnormal{({\sf IND})} on $\lpa$-formulae.
			\item[(v)] $\KF_\star^{\textnormal{\sf int}}$ is the theory obtained from $\KF_\star\har$ by replacing the restricted version of \textnormal{({\sf IND})} with $\textnormal{({\sf IND}}^\textnormal{\sf int})$.
		\end{itemize}
\end{definition}

\begin{fact}\label{ob_Embedding}
Let $\KF_\star\in\{\KF, \KFcs, \KFcp, \KFs\}$. Then \KFst\har is contained in the subtheory \KFintst\har of \KFintst, obtained by allowing (\textnormal{\sf IND}$^\textnormal{\sf int}$) only in the following restricted version:
	 \begin{prooftree}
		\Ax$\Fml_{\lpa}(x), Tx[\dot{u}/v], \fCenter, Tx[\dot{u'}/v]$
		\RightLabel{\textnormal{\footnotesize{(\textnormal{\sf IND}$^\textnormal{\sf int}$\hspace{1mm}\har\lpa)}}}
		\Un$Tx[\bo 0/v], \fCenter, Tx[\dot{z}/v]$
	\end{prooftree}
for $u$ eigenvariable and $z$ arbitrary.
\end{fact}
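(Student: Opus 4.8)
The plan is to argue by induction on the height of \KFst\har-derivations, exploiting that \KFintst\har and \KFst\har differ only in their induction rule. Since \KFintst\har contains $\CL_{=}$, the \PA-axioms, and all truth axioms and truth-rules of Definition~\ref{df_TruthAxioms}, together with the $\star$-specific axiom (\Cons, \Comp, or \GoG as the case may be), every initial sequent and every rule of \KFst\har other than arithmetical induction transfers verbatim to \KFintst\har. The whole content of the Fact therefore reduces to a single point: an application of (\textsf{IND}) restricted to $\lpa$-formulae must be simulated by the restricted internal rule $(\textsf{IND}^{\textsf{int}}\har\lpa)$.

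The device that makes this possible is the \emph{arithmetical $T$-biconditional}: for each fixed $\varphi(v)\in\lpa$, both sequents $\varphi(u)\Then T\cor{\varphi(\dot u)}$ and $T\cor{\varphi(\dot u)}\Then\varphi(u)$ are derivable in \KFintst\har. I would prove this by an external induction on the complexity of $\varphi$, using the identity truth axioms $(T{=})$, $(T{\neg}{=})$ in the atomic case, the conjunction and double-negation truth-rules in the propositional cases, and the quantifier truth-rules together with the regularity axioms \textsf{Reg1}--\textsf{Reg2} in the case $\varphi=\forall w\,\psi$, where the latter reconcile substituting a closed term with substituting the numeral of its value. The key observation is that, for a \emph{fixed} $\varphi$, this meta-induction yields one concrete derivation built solely from truth-rules and classical logic; the only arithmetical identities invoked (the recursion equations for $\subst$ and $\num$ on the specific codes involved) are quantifier-free instances secured by the shared \PA-base, so no object-level induction is consumed in establishing the biconditional.

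Granting the biconditionals, an instance of external arithmetical induction
\[
\frac{\Gamma,\varphi(u)\Then\varphi(u'),\Delta}{\Gamma,\varphi(\bo 0)\Then\varphi(t),\Delta}
\]
with $\varphi\in\lpa$ is handled as follows. Put $x:=\cor{\varphi(v)}$, so that $T\cor{\varphi(\dot u)}=Tx[\dot u/v]$ and likewise for $u'$. By the inductive hypothesis on the derivation the step-sequent is already available in \KFintst\har; cutting it against $T\cor{\varphi(\dot u)}\Then\varphi(u)$ on the left and against $\varphi(u')\Then T\cor{\varphi(\dot{u'})}$ on the right produces $\Gamma, Tx[\dot u/v]\Then Tx[\dot{u'}/v],\Delta$. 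Because $\varphi\in\lpa$ the sequent $\Then\Fml_{\lpa}(x)$ is derivable, so weakening yields the premise of $(\textsf{IND}^{\textsf{int}}\har\lpa)$; the eigenvariable condition on $u$ is inherited from the external rule (so $u$ is free neither in $\Gamma,\Delta$ nor in $t$) and $x$ is a closed code. Applying the internal rule with $z:=t$ gives $\Gamma, Tx[\bo 0/v]\Then Tx[\dot t/v],\Delta$, and two final cuts against $\varphi(\bo 0)\Then T\cor{\varphi(\bo 0)}$ and $T\cor{\varphi(\dot t)}\Then\varphi(t)$ return exactly $\Gamma,\varphi(\bo 0)\Then\varphi(t),\Delta$. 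As the $\star$-axioms played no role, the argument is uniform in $\star\in\{\KF,\KFcs,\KFcp,\KFs\}$.

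I expect the arithmetical $T$-biconditional to be the main obstacle, and within it the bookkeeping that distinguishes the substitution of a term $t$ from the substitution of its numeral $\num(t)$ --- precisely the point at which the regularity axioms and the canonical substitution/evaluation identities of the Preliminaries are needed. Everything else, namely the transfer of the shared axioms and rules, the uses of Cut, and the weakening by $\Fml_{\lpa}(x)$, is routine.
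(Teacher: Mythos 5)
The paper states this as a Fact without giving a proof, and your argument supplies exactly the standard reasoning it tacitly relies on: the only non-trivial point is simulating $\lpa$-restricted (\textsf{IND}) by $(\textsf{IND}^{\textsf{int}}\har\lpa)$, which you do correctly via the arithmetical $T$-biconditionals $\varphi(u)\Leftrightarrow T\cor{\varphi(\dot u)}$, derivable for each fixed $\varphi\in\lpa$ by an external induction on complexity using only the truth axioms/rules and the canonical coding identities, hence without consuming any object-level induction. Your handling of the eigenvariable condition (noting that $x=\cor{\varphi(v)}$ is closed) and of the numeral-versus-term substitution bookkeeping is sound, so the proposal is correct and matches the intended argument.
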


\begin{remark}\label{rm_Literals}
	Let $\KFintst\in\{\KFint, \KFintcs, \KFintcp, \KFints\}$. Every axiom of \KFintst has the form $\Theta, \GD, \Lambda$. Formulae in $\Theta, \Lambda$ are called \emph{active}. Every active formula has positive complexity $0$.\footnote{%
	Literals have positive complexity $=0$.
}
\end{remark}

Call a derivation \Dmc quasi-normal if \Dmc has cut rank 0. By application of standard techniques for Cut-elimination for predicate calculus, we get

\begin{proposition}\label{pr_CutEliminationKFint}
	Let \KFintst be as in Remark~\ref{rm_Literals}. Then every \KFintst-derivation \Dmc can be transformed into a \emph{quasi-normal} derivation $\Dmc'$ with the same end sequent.
\end{proposition}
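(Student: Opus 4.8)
The plan is to run the standard Gentzen/Tait cut-reduction argument for the classical predicate calculus $\CL_=$ underlying \KFintst, with the single extra ingredient supplied by Remark~\ref{rm_Literals}: in every initial sequent the active formulae are literals, hence of positive complexity $0$. Consequently a cut whose cut formula has positive complexity $>0$ can never have that formula principal in a premise that is an axiom (a basic arithmetical sequent, a truth axiom, or a regularity axiom), nor in a premise ending in a truth-rule, an identity rule $(\textnormal{Ref})$/$(\textnormal{RepL})$, or $(\textnormal{\sf IND}^{\textnormal{\sf int}})$, since all of these introduce only literals; such a formula can be principal only in a premise whose last inference is one of the logical rules $\land\textnormal{L},\land\textnormal{R},\forall\textnormal{L},\forall\textnormal{R},\neg\textnormal{L},\neg\textnormal{R}$. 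This is exactly the configuration in which the familiar reductions apply, and it is what lets us drive the cut rank down to $0$ rather than merely to the complexity of the axioms.

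Concretely, I would argue by a double induction: an outer induction on the cut rank $r$ of \Dmc and, for a fixed topmost cut of rank $r>0$ whose two immediate subderivations already have cut rank $<r$, an inner induction on the sum of the heights of those subderivations. Two cases arise. \emph{Principal/principal}: the cut formula $\varphi$ (of positive complexity $r>0$, hence not a literal) is principal in both premises, so by the observation above both premises end in the matching left/right logical rule for the outermost connective of $\varphi$, and one performs the standard connective reduction --- for $\land$ splitting into two cuts on the conjuncts, for $\neg$ flipping the side and cutting on the immediate subformula, for $\forall$ first substituting the witnessing term for the eigenparameter throughout the $\forall\textnormal{R}$-subderivation and then cutting. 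Each of these replaces the cut by cuts on proper subformulae of $\varphi$, of positive complexity $<r$; and since sequents are \emph{sets}, contraction is automatic and the duplicated contexts simply merge. \emph{Permutation}: $\varphi$ is not principal in at least one premise, and the cut is pushed above the last inference of that premise, appealing to the inner induction hypothesis at smaller height. This case uniformly absorbs the truth-rules and $(\textnormal{\sf IND}^{\textnormal{\sf int}})$ (whose active formulae are literals, so $\varphi$ lies in their context) as well as the subcase where that premise is an axiom.

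I expect the two points requiring genuine care --- though both are routine --- to be, first, the \emph{eigenvariable conditions}: when a cut is permuted above $\forall\textnormal{R}$, $\neg\forall\textnormal{L}$, or $(\textnormal{\sf IND}^{\textnormal{\sf int}})$, and when the witnessing term is substituted for an eigenparameter in the principal $\forall$-case, one must first rename the eigenparameter so that it clashes neither with the other premise nor with the substituted term; this rests on a height- and cut-rank-preserving substitution lemma that I would record beforehand. Second, the \emph{axiom/permutation} interaction relies on the fact, made explicit in Remark~\ref{rm_Literals}, that the basic arithmetical axioms of \KFintst are presented as initial sequents with only literal active formulae (e.g.\ successor injectivity as $Sx=Sy\Then x=y$), so that no complex formula is ever handed down by an axiom and the otherwise-problematic ``complex cut formula principal in an axiom'' configuration simply cannot occur; since the initial sequents are schematic in their context $\GD$, deleting $\varphi$ from that context again yields an instance of the same schema. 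Granting these, each reduction either lowers the height at rank $r$ or replaces a rank-$r$ cut by cuts of rank $<r$; iterating removes all cuts of rank $r$, lowering the cut rank of the whole derivation below $r$, and the outer induction terminates at cut rank $0$, yielding the required quasi-normal $\Dmc'$ with the same end sequent.
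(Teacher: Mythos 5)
Your proposal is correct and matches the paper's intent: the paper itself gives no detailed argument, merely invoking ``standard techniques for Cut-elimination for predicate calculus'' together with the design observation of Remark~\ref{rm_Literals} (and the footnote to Remark~\ref{rm_Calculi}) that every axiom and non-logical rule of \KFintst has only literals as active formulae, so a cut formula of positive complexity $>0$ can be principal only in a logical inference. Your double induction with the principal/principal reductions and permutations is precisely the standard argument being appealed to, so there is nothing to add.
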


\begin{proposition}[Inversion]\label{pr_HPInversion} Let \KFintst be as in Remark~\ref{rm_Literals}. Then
	\begin{enumerate}[label=(\roman*)]
		\item If $\KFintst\vdash^n\GD, \forall v\varphi$, then $\KFintst\vdash^n\GD, \varphi[u/v]$ for any $u\notin FV(\Gamma, \Delta, \varphi)$.
		\item If $\KFintst\vdash^n\forall v\varphi, \GD$, then $\KFintst\vdash^n\varphi[t/v], \GD$ for some term $t$.
	\end{enumerate}
\end{proposition}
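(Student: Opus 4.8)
The plan is to prove (i) and (ii) together by induction on the height $n$ of the given \KFintst-derivation, distinguishing cases according to the last inference. Since the formula being inverted is a purely logical $\forall$ binding an \lpat-formula, every arithmetical initial sequent, every identity and truth rule, and the induction rules fall under a single uniform ``side-formula'' treatment, so the genuinely new cases are only those in which $\forall v\varphi$ is the principal formula. Before entering the induction I would record two structural facts for these calculi, each by the same kind of induction on height: height-preserving weakening, and height-preserving renaming of free parameters. The latter is what lets me substitute the prescribed variable for the eigenvariable actually occurring in a derivation, and lets me dodge clashes with the eigenvariable conditions attached to $(\forall\textnormal{R})$, $(\neg\forall\textnormal{L})$ and the induction rules. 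I also use Remark~\ref{rm_Literals}: the active formulae of every initial sequent are literals, so the complex formula $\forall v\varphi$ can occur in an initial sequent only inside the context; hence in the base case $n=0$ replacing it (by $\varphi[u/v]$ in (i), by $\varphi[t/v]$ for any chosen $t$ in (ii)) leaves a witnessing literal untouched and keeps the sequent initial.

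For the inductive step of (i), the sole inference with $\forall v\varphi$ principal in the succedent is $(\forall\textnormal{R})$; its premise $\GD,\varphi[u'/v]$ is derivable with height $n-1$, and since the eigenparameter $u'$ is not free in $\GD$ while the prescribed $u$ is not free in $\Gamma,\Delta,\varphi$, height-preserving renaming of $u'$ to $u$ produces $\GD,\varphi[u/v]$ without capture, at height $n-1\le n$. In all other cases $\forall v\varphi$ is a side formula in the succedent of each premise: I apply the induction hypothesis to every premise with one common fresh $u$, and reapply the last rule. Here it is essential that the witness in (i) is \emph{universally} quantified, because then the same $u$ can be threaded through both premises of a branching rule such as $(\land\textnormal{R})$; any clash between $u$ and an eigenvariable of the last inference is removed beforehand by renaming, and a final renaming delivers the conclusion for the exact $u$ demanded by the statement.

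For (ii) the principal case is immediate: if the last inference is $(\forall\textnormal{L})$ on $\forall v\varphi$, its premise is already $\varphi[t/v],\GD$ for the term $t$ used there, at height $n-1$, and this $t$ is the witness. When $\forall v\varphi$ is a side formula of a \emph{one-premise} rule the induction hypothesis again yields a single witness which survives reapplication of the rule.

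The step I expect to be the main obstacle is the branching (two-premise) rules in part (ii)---$(\land\textnormal{R})$, $(\textnormal{Cut})$, and the two-premise truth rules. Here the induction hypothesis supplies a witness $t_1$ for the left premise and a possibly different $t_2$ for the right premise, whereas the conclusion demands a \emph{single} term: this is exactly the point at which the existential witness of (ii) conflicts with branching, in sharp contrast to the universal witness of (i). I would try to absorb this difficulty by proving in tandem the uniformly height-preserving companion statement ``$\vdash^n\forall v\varphi,\GD$ implies $\vdash^n\varphi[t/v],\forall v\varphi,\GD$ for every $t$'', in which the principal formula is retained so that the same $t$ passes through both premises without conflict, and then recovering the stated form by activating $\forall v\varphi$ once along the derivation. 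Pinning down this extraction---together with the eigenvariable bookkeeping inherited from the $\neg\forall$ and induction rules---is where the real work lies; the remaining cases are routine reassembly of the last inference.
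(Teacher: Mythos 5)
Your treatment of part (i) is the standard height-preserving inversion argument and is correct as far as it goes; the paper itself offers no proof of Proposition~\ref{pr_HPInversion} (it is presented as an instance of ``standard techniques'' alongside Proposition~\ref{pr_CutEliminationKFint}), and your combination of height-preserving weakening, renaming of eigenparameters, and Remark~\ref{rm_Literals} for the base case is exactly what one would write down, granting that the \PA-axioms are taken in open form so that no initial sequent has a quantified active formula. Your instinct that all the real difficulty sits in part (ii) at the two-premise rules is also exactly right.

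However, your proposed repair of (ii) does not close the gap, and in fact no repair can, because (ii) is false as stated. The companion statement ``$\vdash^{n}\forall v\varphi,\GD$ implies $\vdash^{n}\varphi[t/v],\forall v\varphi,\GD$ for every $t$'' is nothing but height-preserving weakening (sequents are sets), so it carries no information; and the step of ``activating $\forall v\varphi$ once along the derivation'' is precisely the point at which a single witness must be selected, so the original problem reappears untouched. For a counterexample to (ii), let $\varphi(v)$ be the literal $T(x[\dot{v}/z])$ with $x,z$ free variables distinct from $v,u$. Then $\forall v\,\varphi(v)\Then\forall u\,\varphi(u)$ has a cut-free \KFintst-derivation of height $2$ (one $(\forall\mathrm{L})$ followed by one $(\forall\mathrm{R})$), yet for no single term $t$ is $\varphi(t)\Then\forall u\,\varphi(u)$ derivable: it is falsified in a closed-off fixed-point model under an assignment sending $x$ to the code of an arithmetical formula whose $t$-instance is true while some other instance is false. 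What your induction actually yields at the branching rules---and what the applications of (ii) in the proof of Lemma~\ref{le_TKFintSubPKF} genuinely need---is the Herbrand-style conclusion that $\varphi[t_{1}/v],\dots,\varphi[t_{k}/v],\GD$ is derivable for some \emph{finite set} of terms (take the union of the witness sets supplied by the two premises, using weakening). That weaker form does suffice downstream: in the relevant cases of the Main Lemma the matrix $\varphi$ is a literal, and the corresponding \PKFst-rules (the left truth rule for the universal quantifier, and $(\neg\forall\mathrm{R})$) can be applied once per witness with one and the same conclusion, the set-based sequents absorbing the duplicates. You should prove and use that version in place of (ii) as stated.
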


We move on to \PKF-like theories.
\begin{definition}[\PKF]\label{df_PKF} \PKF is obtained from $\FDE_=$ by adding sequents $\GD, \varphi$ and $\neg\varphi,\GD$ for $\varphi$ axiom of \PA, (\textnormal{\sf IND}), truth axioms and truth-rules of Df.~\ref{df_TruthAxioms}, and the following two rules requiring identity statements to behave classically:\footnote{
	Adding $\neg\varphi, \GD$ for $\varphi$ axiom of \PA to $\PKF$-like theories makes contraposition admissible in \PKF and \PKFs (see Lemma~\ref{le_Contraposition}).
}
\vspace{2mm}
	\begin{center}
		\AxC{$\GD, s=t$}
		\RL{\textnormal{$=\neg$L}}
		\UnC{$\neg(s=t), \GD$}
		\Dis
		\hspace{15mm}
		\AxC{$ s=t, \GD$}
		\RL{\textnormal{$=\neg$R}}
		\UnC{$\GD, \neg(s=t)$}
		\Dis
	\end{center}

\end{definition}

\begin{definition}[\PKF-cluster]\label{df_PKFCluster} 
	We introduce variants of \PKF
		\begin{itemize}
			\item[(i)] $\PKFcs$ is obtained by adding \textnormal{($\neg$L)} to \PKF.
			\item[(ii)] $\PKFcp$ is obtained by adding \textnormal{($\neg$R)} to \PKF.
			\item[(iii)] $\PKFs$ is obtained by adding \textnormal{(\GG)} to \PKF.
		\end{itemize}
		Let $\PKF_\star\in\{\PKF, \PKFcs, \PKFcp, \PKFs\}$. Then
		\begin{itemize}
			\item[(iv)] $\PKF_\star\har$ is obtained from $\PKF_\star$ by restricting \textnormal{({\sf IND})} on $\lpa$-formulae.
			\item[(v)] $\PKF_\star^+$ is obtained by extending $\PKF_\star$ with $(\TI^{<\varepsilon_0})$.
		\end{itemize}
\end{definition}

Since our formulation of \PKF deviates from the formulation in \cite{hahAxiomatizing}, we now show that \PKF behaves classically on the $T$-free fragment of \lpat and that $\psi(\vec{x})$ and $T\cor{\psi(\dot{\vec{x}})}$ are interderivable.

\begin{lemma}\label{le_PKF}
	Let $\PKF_\star$ be one of the \PKF-like theories introduced in Df.~\ref{df_PKFCluster}, $\varphi\in\lpa$, and $\psi(\vec{x})\in\lpat$. Then
		\begin{enumerate}[label=(\roman*)]
			\item $\PKF_\star\vdash\GD, \varphi, \neg\varphi$ and $\PKF_\star\vdash\varphi, \neg\varphi, \GD$;
			\item $\PKF_\star\vdash\GD, \psi(\vec{x})$ iff $\PKF_\star\vdash\GD, T\cor{\psi(\dot{\vec{x}})}$;
			\item Unrestricted \emph{($\neg$L)} and \emph{($\neg$R)} are admissible for $\varphi\in\lpa$.
		\end{enumerate}
\end{lemma}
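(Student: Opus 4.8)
The plan is to prove the three items in the order (i), (iii), (ii): item (iii) is an immediate consequence of (i) by Cut, and (ii) rests on the classicality of the arithmetical vocabulary delivered by (i). For (i), I would prove the two sequents simultaneously by induction on the positive complexity of $\varphi\in\lpa$. As identity is the only predicate of \lpa, the base case is $\varphi\equiv s=t$, where $\GD, s=t, \neg(s=t)$ and $s=t, \neg(s=t), \GD$ follow from the initial sequent $s=t\Then s=t$ by a single application of $(=\neg\textnormal{R})$, respectively $(=\neg\textnormal{L})$, together with weakening. The induction steps are pure \FDE rule-chasing: for $\varphi\equiv\psi_1\land\psi_2$ one feeds the weakened instances $\GD,\psi_i,\neg\psi_i$ into $(\land\textnormal{R})$ and $(\neg\land\textnormal{R})$ (dually $(\land\textnormal{L}),(\neg\land\textnormal{L})$ for the left-hand sequent); the case $\neg\varphi$ uses $(\neg\neg\textnormal{R})/(\neg\neg\textnormal{L})$; and $\forall v\psi$ uses $(\neg\forall\textnormal{R})$ followed by $(\forall\textnormal{R})$ under one and the same fresh eigenparameter (dually $(\forall\textnormal{L})$ then $(\neg\forall\textnormal{L})$), the eigenvariable condition being met because the principal formula contains no free occurrence of the parameter.

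Item (iii) then follows by a single Cut against (i): for $(\neg\textnormal{L})$ cut a proof of $\GD,\varphi$ against the explosion sequent $\varphi,\neg\varphi\Then$ on $\varphi$, and $(\neg\textnormal{R})$ is symmetric, cutting $\varphi,\GD$ against the excluded-middle sequent $\Then\varphi,\neg\varphi$. In combination with the \PA-axioms and $(\textnormal{\sf IND})$ this shows that each $\PKFst$ restricted to \lpa simply is classical \PA; in particular, if $\PA\vdash\sigma$ then $\PKFst$ proves $\Then\sigma$ for arithmetical $\sigma$, a fact I use throughout (ii) to discharge the arithmetical side conditions $\Sent(\cdot)$ and $\ClTerm(\cdot)$ that decorate the truth rules.

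For (ii), the displayed equivalence follows once one has the four interderivability sequents
\begin{gather*}
\psi(\vec x)\Then T\cor{\psi(\dot{\vec x})},\qquad T\cor{\psi(\dot{\vec x})}\Then\psi(\vec x),\\
\neg\psi(\vec x)\Then T\cor{\neg\psi(\dot{\vec x})},\qquad T\cor{\neg\psi(\dot{\vec x})}\Then\neg\psi(\vec x),
\end{gather*}
by cutting the first two against a given proof of $\GD,\psi(\vec x)$ or $\GD,T\cor{\psi(\dot{\vec x})}$; and since every variant extends \PKF, it is enough to derive these four in \PKF itself. I would prove them by simultaneous induction on $\psi$. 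The atomic identity case uses $(T=)$ and $(T\neg=)$ with the \PA-provable coding identities $\val(\cor{t(\dot{\vec x})})=t(\vec x)$ and $\cor{s=t}=\cor{s}\,\ud=\,\cor{t}$ (applied through the replacement schema for literals); the atomic case $T(t)$ uses $T\textsf{rpL}/T\textsf{rpR}$ positively and $T\textsf{nrpL}/T\textsf{nrpR}$ for the negation. In the step, $\land$ is handled by $T\textsf{andL}/T\textsf{andR}$ and $T\textsf{nandL}/T\textsf{nandR}$ together with the \FDE conjunction rules and the hypothesis; $\neg\psi$ reduces to the companion sequents of the hypothesis, the doubly-negated subcase being closed by $T\textsf{dnL}/T\textsf{dnR}$; and $\forall$ is handled by $T\textsf{allL}/T\textsf{allR}$ and $T\textsf{nallL}/T\textsf{nallR}$, where the internal, numeral-indexed quantifier is instantiated by the eigenparameter introduced for the external quantifier so as to match the hypothesis for $\chi$ taken with the enlarged variable vector.

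I expect the genuine obstacle to lie in the negated atomic case $T(t)$, handled by $T\textsf{nrpL}/T\textsf{nrpR}$, where a negation must be pushed across a truth ascription. This is exactly where the consistency and completeness axioms $(\neg T\neg)$—present in \PKF but deliberately absent from \KF—are indispensable. The completeness axiom yields $\neg T(t)\Then T\cor{\neg T(t)}$, the non-sentence alternative of $T\textsf{nrpR}$ being absorbed on the right by the excluded middle for $\Sent(\cdot)$ from (i); the converse $T\cor{\neg T(t)}\Then\neg T(t)$ needs, beyond the consistency axiom, the sequent $\neg\Sent(t)\Then\neg T(t)$, i.e.\ the contrapositive of $(T\Sent)$. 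This last sequent cannot be obtained from the \FDE rules directly—there is no right-negation rule for $T$-literals—and must instead be extracted from the admissibility of contraposition in \PKF, from where it transfers to every variant, since these only extend \PKF. Managing this one asymmetric direction, rather than any connective case, is where the argument has to be set up with care.
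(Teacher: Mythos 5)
Your overall architecture coincides with the paper's: the paper proves (i) and (ii) by ``a straightforward induction on $\varphi$'' with no further detail, and proves (iii) by exactly the two Cuts you describe, so your treatment of (i) and (iii), and of all the connective and quantifier cases of (ii), is a correct filling-in of what the paper leaves implicit. You have also put your finger on the one genuinely non-routine point, namely the direction $T\cor{\neg T(t)}\Then\neg T(t)$, whose second $T\textnormal{\sf nrp}$L-premise requires the sequent $\neg\Sent(t)\Then\neg T(t)$.

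The problem is your proposed discharge of that sequent. You extract it from ``the admissibility of contraposition in \PKF''; but contraposition is the paper's Lemma~\ref{le_Contraposition}, proved \emph{after} the present lemma by induction on the height of derivations, and the base case of that induction for the axiom $(T\Sent)$ (i.e.\ $Tx,\GD,\Sent(x)$ with empty context) is precisely the contrapositive $\neg\Sent(x)\Then\neg Tx$. So appealing to contraposition here is circular: one cannot obtain this sequent from contraposition without already possessing a derivation of it. Nor is a direct derivation available from the calculus as given: the only device of \PKF that puts a formula $\neg T(s)$ on the right of a sequent, other than an initial sequent with $\neg T(s)$ already on the left, is $(\neg T\neg)(i)$, and that axiom is guarded by $\Sent(x)$ in the antecedent; semantically, the axioms constrain the anti-extension only on codes of sentences, so nothing in \PKF (as opposed to \PKFcp, where $(\neg\textnormal{R})$ settles the matter as you note) forces $\neg T(t)$ to hold when $\val(t)$ fails to be a sentence. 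What your write-up needs, and what the paper's ``straightforward induction'' also silently presupposes, is an independent derivation of $\neg\Sent(t)\Then\neg T(t)$ (equivalently, a different route to $T\cor{\neg T(t)}\Then\neg T(t)$); as it stands, this step does not go through, and it is the only step of your proposal that fails.
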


\begin{proof}
	(i) and (ii) are shown by a straightforward induction on $\varphi$. For (iii), observe more generally that, if $\varphi, \neg\varphi, \GD$ and $\GD, \varphi, \neg\varphi$ are both derivable, then ($\neg$L) and ($\neg$R) are derived rules
\vspace{2mm}
	\begin{center}
		\Ax $\fCenter, \varphi$
		\Ax $\varphi, \neg\varphi, \fCenter$		
		\RL{Cut}
		\Bn $\neg\varphi, \fCenter$
		\Dis
		\hspace{15mm}
		\Ax $\fCenter, \neg\varphi, \varphi$
		\Ax $\varphi, \fCenter$		
		\RL{Cut}
		\Bn $\fCenter, \neg\varphi$
		\Dis
	\end{center}
\end{proof}

Finally, to complete the picture we note that contraposition is admissible in \PKF  and \PKFs.

\begin{lemma}[Contraposition]\label{le_Contraposition}
	Contraposition, i.e.~the rule
	\[
		\frac{ \GD }{ \neg\Delta\Then\neg\Gamma }
	\]
is admissible in \PKF ($\PKF\har,\PKF^+$) and \PKFs ($\PKFs\har,\PKFs^+$).
\end{lemma}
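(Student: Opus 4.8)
The plan is to argue by induction on the height of a derivation $\mathcal D$ of $\Gamma\Then\Delta$, producing a derivation of $\neg\Delta\Then\neg\Gamma$. In the base case one checks that the contrapositive of every initial sequent is again derivable, and in the inductive step that, for each rule, the contrapositive of the conclusion follows from the contrapositives of the premises (available by the induction hypothesis). Two pieces of bookkeeping recur throughout: contraposition creates double negations—a literal $\neg\alpha$ becomes $\neg\neg\alpha$, and an antecedent $\neg\varphi$ reappears as a succedent $\neg\neg\varphi$—which are absorbed by the (invertible) rules $(\neg\neg\textsf L)$/$(\neg\neg\textsf R)$; and, by Lemma~\ref{le_PKF}, every purely arithmetical side goal may be discharged with full classical logic, since both excluded middle and \emph{ex falso} hold for $\varphi\in\lpa$.

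For the logical core the step is routine: contraposition exchanges each \FDE-rule with the rule for the dual connective and polarity—$(\land\textsf L)\leftrightarrow(\neg\land\textsf R)$, $(\land\textsf R)\leftrightarrow(\neg\land\textsf L)$, $(\forall\textsf L)\leftrightarrow(\neg\forall\textsf R)$, $(\forall\textsf R)\leftrightarrow(\neg\forall\textsf L)$—with Cut self-dual and the eigenparameter conditions transferring unchanged. The identity rules pair as $({=}\neg\textsf L)\leftrightarrow({=}\neg\textsf R)$, and (Ref), (RepL)/(RepR) are handled via their equivalence over \FDE (Remark~\ref{rm_Calculi}). The \PA-axiom initial sequents are exactly where the design choice flagged in the footnote is used: since both $\Gamma\Then\Delta,\varphi$ and $\neg\varphi,\Gamma\Then\Delta$ are available, the contrapositive of each is (modulo one $(\neg\neg)$-step) an instance of the other. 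For \PKFs the one extra rule is $(\GG)$; its contrapositive is derivable by a small trick—apply $(\GG)$ with the glut atom taken to be $\psi$ and the premise atom taken to be $\varphi$, so that both premises, $\varphi,\neg\Delta\Then\neg\Gamma,\varphi,\neg\varphi$ and $\neg\varphi,\neg\Delta\Then\neg\Gamma,\varphi,\neg\varphi$, are identity axioms, yielding $\psi,\neg\psi,\neg\Delta\Then\neg\Gamma,\varphi,\neg\varphi$, and then cut twice against the contraposed premises $\neg\Delta\Then\neg\Gamma,\neg\psi$ and $\neg\Delta\Then\neg\Gamma,\psi$ to obtain $\neg\Delta\Then\neg\Gamma,\varphi,\neg\varphi$, which is the required contrapositive up to a $(\neg\neg)$-step. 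This works precisely because $(\GG)$ is symmetric; by contrast the extra rules of \PKFcs and \PKFcp, namely $(\neg\textsf L)$ and $(\neg\textsf R)$, have no contrapositive dual inside \K resp.\ \LP, which is why the lemma is confined to \PKF and \PKFs.

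The truth axioms and truth-rules form the first substantial block, and the device that closes them under contraposition is $(\neg T\neg)$, which makes $\neg T(x)$ and $T(\ud\neg x)$ interderivable modulo $\Sent(x)$. This lets each truth-rule be matched with its negated counterpart: contraposing $(T{=})$, say, one must produce $\neg(\val(x)=\val(y))$ on the right from $\neg T(x\ud= y)$ on the left, which is done by passing to $T(\ud\neg (x\ud= y))$ via $(\neg T\neg)$ and then applying $(T\neg{=})$, the arithmetical premises (here $\Sent(x\ud= y)$ from $\ClTerm(x),\ClTerm(y)$) being supplied classically by Lemma~\ref{le_PKF}. The same pattern—rewrite $\neg T$ as $T\ud\neg$, apply the negated rule, rewrite back—turns $(T\mathsf{andL})$ into $(T\mathsf{nandR})$, $(T\mathsf{allL})$ into $(T\mathsf{nallR})$, and so on, the required facts about the syntactic operations (e.g.\ $\Sent(x\ud\land y)\to\Sent(x)\land\Sent(y)$) being arithmetical and hence available.

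I expect the induction rules to be the main obstacle, because contraposition reverses the direction of induction. For $(\textsf{IND})$ the premise and conclusion keep the same (flipped) context $\neg\Delta\Then\neg\Gamma$, so the contrapositive is a descending induction, which I would recover from $(\textsf{IND})$ by reindexing the induction variable through truncated subtraction: apply $(\textsf{IND})$ to $\neg\varphi(t\mathbin{\dot{-}}w)$, so that $w\mapsto w{+}1$ walks the argument downward; the boundary ($w<t$ versus $t\le w$) is settled by the classical case split of Lemma~\ref{le_PKF} together with a routine replacement lemma for the provably equal terms, and, crucially, no clash of $\varphi$ with $\neg\varphi$ is ever needed—the boundary instances close as identity axioms. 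The genuinely delicate case is $(\TI^{<\varepsilon_0})$ in $\PKF^+$ and $\PKFs^+$: its contrapositive is a transfinite descent (least-element) principle, the subtraction reindexing has no ordinal analogue, and because the logic is paraconsistent the naive ``take a minimal counterexample and derive a contradiction'' argument is blocked ($\varphi(u),\neg\varphi(u)\Then$ is not derivable for \lpat-formulae). My plan here is to run the well-foundedness through $(\TI^{<\alpha})$ itself while keeping every case distinction on the \emph{arithmetical} ordering $\prec$ and on $\mathsf{Ord}$, which are classical by Lemma~\ref{le_PKF}, so that all appeals to \emph{ex falso} fall on arithmetical formulae and never on $\varphi$; getting this bookkeeping to close is, I expect, where the real work of the proof lies.
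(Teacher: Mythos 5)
Your proposal follows the paper's proof exactly where the paper gives details: the argument is an induction on the height of derivations, and the two cases the paper singles out as crucial, $(\GG)$ and (RepL), are handled by you in essentially the same way. In particular your $(\GG)$ step --- instantiate the rule so that both premises are identity sequents containing $\varphi$ on the right, obtain $\psi,\neg\psi,\neg\Delta\Then\neg\Gamma,\varphi,\neg\varphi$, cut twice against the contraposed premises (one of which needs invertibility of $\neg\neg$), and finish with a $(\neg\neg\textnormal{R})$ step --- is literally the paper's derivation, and your observation that the added sequents $\neg\varphi,\GD$ for \PA-axioms are what make the base case go through matches the paper's own footnote. Your treatment of $(\mathsf{IND})$ by reindexing through truncated subtraction, with the boundary instances closing as identity sequents modulo replacement and the case split on the arithmetical formula $w<t$ discharged classically via Lemma~\ref{le_PKF}, is sound and is in fact more explicit than the paper, which omits this case entirely.

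The one case you do not close is $(\TI^{<\varepsilon_0})$ for $\PKF^+$ and $\PKFs^+$, and your instinct that this is where the real work lies is correct: the contrapositive of the rule is a descent principle, and the obvious internalizations fail because \FDE and \KS lack both a deduction theorem and disjunctive syllogism, so the hypothesis ``$\neg\varphi(u),\neg\Delta\Then\neg\Gamma$ for all $u\prec a$'' cannot simply be traded for a formula $\psi(u)$ (such as $\varphi(u)\lor\mvee\neg\Gamma$) to which $(\TI^{<\alpha})$ could be applied, since recovering the sequent from $\psi(u)$ would need $\varphi(u)\lor\chi,\neg\varphi(u)\Then\chi$. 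Keeping the case distinctions on the arithmetical ordering, as you propose, does not by itself dissolve this, because the blocked step concerns $\varphi\in\lpat$, not the ordering. To be fair to you, the paper's proof is silent on this case as well --- it exhibits only the $(\GG)$ and (RepL) cases --- so your sketch matches the published argument in coverage; but as written neither your plan nor the paper's text actually establishes admissibility of contraposition in the presence of $(\TI^{<\varepsilon_0})$, and closing that case requires an additional idea rather than just bookkeeping.
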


\begin{proof} 
The proof is by induction on the height of derivations. We show two crucial cases involving the rules (\GG) and (RepL). Suppose the derivation ends with	
	\begin{prooftree}
		\AxC{}
		\RightLabel{$\Dmc_0$}
		\branchDeduce
		\Deduce$\psi, \fCenter$
		\AxC{}
		\RightLabel{$\Dmc_1$}
		\branchDeduce
		\Deduce$\neg\psi, \fCenter$
		\RL{\GG}
		\Bn$\varphi, \neg\varphi, \fCenter$
	\end{prooftree}	
We have
	\begin{prooftree}
	\footnotesize
	\def\fCenter{\mbox{$\neg\Delta \Then \neg\Gamma$}}
		\AxC{}
		\RL{i.h.}
		\branchDeduce
		\Deduce$\fCenter, \neg\psi$
		\AxC{}
		\RL{i.h.}
		\branchDeduce
		\Deduce$\fCenter, \neg\neg\psi$
		\RL{Inv\footnotemark}
		\Un$\fCenter, \psi$
		\AxC{$\varphi, \neg\Delta \Then \neg\Gamma, \varphi, \neg\varphi$ }
		\AxC{$\neg\varphi, \neg\Delta \Then \neg\Gamma, \varphi, \neg\varphi$ }
		\RL{\GG}
		\BnC{$\psi, \neg\psi, \neg\Delta \Then \neg\Gamma, \varphi, \neg\varphi$}
		\RL{Cut}
		\BnC{$\neg\psi, \neg\Delta \Then \neg\Gamma, \varphi, \neg\varphi$}
		\RL{Cut}
		\Bn$\fCenter, \varphi, \neg\varphi$
		\RL{$\neg\neg$R}
		\Un$\fCenter, \neg\varphi, \neg\neg\varphi$
	\end{prooftree}	
\footnotetext{
	Notice that in \PKF we have invertibility of ($\neg\neg$R).
}
If the derivations ends with
	\begin{prooftree}
		\AxC{}
		\RightLabel{$\Dmc_0$}
		\branchDeduce
		\Deduce$\varphi(t), \fCenter$
		\RL{RepL}
		\Un$s=t, \varphi(s), \fCenter$ 
	\end{prooftree}	
we reason as follows. We first derive
	\begin{prooftree}
	\def\fCenter{\mbox{$\neg\Delta \Then \neg\Gamma$}}
		\AxC{}
		\RL{i.h.}
		\branchDeduce
		\Deduce$\fCenter, \neg\varphi(t)$
		\Ax$\neg\varphi(s), \fCenter, \neg\varphi(s)$
		\RL{RepL}
		\Un$s=t, \neg\varphi(t), \fCenter, \neg\varphi(s)$
		\RL{Cut}
		\Bn$s=t, \fCenter, \neg\varphi(s)$
		\RL{$=\neg$R}
		\Un$\fCenter, \neg\varphi(s), \neg(s=t)$
	\end{prooftree}
\end{proof}


\section{Reinhardt's Challenge}
\label{sec:InternalLogic}
In this section we address {\sc Reinhardt's Challenge} and show that given a \KF-like theory there is a corresponding \PKF-like theory such that the set of inferences $\TGD$ provable in the \PKF-like theory coincides with the set of significant inferences of the \KF-like theory. This observation may be considered as a positive answer to Question b) discussed in the Introduction, that is, as providing an independent axiomatization of the significant inferences of the \KF-like theories. Moreover, by axiomatizing the set of significant inferences of \KF-like theories we obtain a positive answer to the {\sc Generalized Reinhardt Problem} as an immediate corollary. More precisely, as shown in Proposition \ref{prop_ReinKF},  every significant inference of a \KF-like theory has a significant derivation, i.e., whenever  the theory proves $\TGD$, we can find a derivation \Dmc of \TGD such that every node of \Dmc is itself a significant inference.

To begin with define the notion of significant inferences for arbitrary truth theories.

\begin{definition}[Significant Inferences]\label{df_SignificantInferences}
	Let $\mathrm{Th}$ be an axiomatic truth-theory formulated in $\lpat$, and $\Gamma,\Delta$ be finite sets of \lpat-sentences. The set of \textit{significant inferences} of $\mathrm{Th}$ is defined as
		\begin{IEEEeqnarray*}{C}
			\mathrm{Th}\Ssf\Isf:=\{ \ang{\Gamma,\Delta} \mid \mathrm{Th}\vdash \TGD \}.
		\end{IEEEeqnarray*}
\end{definition}


Since the truth predicate of \PKF-like theories is transparent (cf.~Lemma~\ref{le_PKF}(ii)) the significant inferences of any \PKF-like theory will simply amount to the set of provable inferences of the theory. We also note in passing that the significant part of a truth theory ($\mathrm{Th}\Ssf$) in the sense of \cite{reiRemarks85,reiRemarks}, that is the provably true sentences of the theory, can be retrieved from $\mathrm{Th}\Ssf\Isf$ by setting\begin{IEEEeqnarray*}{C}
			\mathrm{Th}\Ssf:=\{ \varphi\in\Sent \mid \ang{\emptyset,\varphi}\in\mathrm{Th}\mathsf{SI} \}.
		\end{IEEEeqnarray*}

Let us now show that for each \KF-like theory there is a \PKF-like theory such that the provable sequents of the latter constitute exactly the significant inferences of the former.

\begin{definition}[$\PKF^\circ, \KF^\circ$]\label{pairs}The pair $(\PKF^\circ, \KF^\circ)$ is a variable ranging over the following theory-pairs
	\begin{IEEEeqnarray*}{C}
		(\PKF\har, \KF\har), (\PKFcs\har, \KFcs\har), (\PKFcp\har, \KFcp\har), (\PKFs\har, \KFs\har) \\
		(\PKF, \KFint), (\PKFcs, \KFintcs), (\PKFcp, \KFintcp), (\PKFs, \KFints)
		\\
		(\PKF^+, \KF), (\PKFcs^+, \KFcs), (\PKFcp^+, \KFcp), (\PKFs^+, \KFs).\end{IEEEeqnarray*}
Moreover, let $\mathrm{Th}\in\{\KF,\KFint,\KF\har,\PKF^+,\PKF\har\}$. Then $\mathrm{Th}_\star\in\{\mathrm{Th},\mathrm{Th}_\mathsf{S}, \mathrm{Th}_\mathsf{cs},\mathrm{Th}_\mathsf{cp}\}.$ 
\end{definition}

We can now start proving the principal result of this section, i.e., we can prove that $\PKF^\circ=\KF\Ssf\Isf^\circ$. We first show that $\PKF^\circ\Sub\KF\Ssf\Isf^\circ$.

\begin{proposition}\label{le_H&HNExt}Let $(\PKF^\circ, \KF^\circ)$ as defined in Definition \ref{pairs}. Then
		\begin{IEEEeqnarray*}{C}
			\text{If } \PKF^\circ\vdash\GDv, \ \text{ then } \
			\KF^\circ\vdash \TGDv.
		\end{IEEEeqnarray*}\end{proposition}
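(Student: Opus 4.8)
The plan is to establish the implication---equivalently, the inclusion $\PKF^\circ\Sub\KF\Ssf\Isf^\circ$---by induction on the height of the $\PKF^\circ$-derivation, showing that to every $\PKF^\circ$-derivable sequent $\GDv$ there corresponds a $\KF^\circ$-derivation of its starred translation $\TGDv$. Concretely, this amounts to checking that the calculus of $\PKF^\circ$ is ``$T$-sound'' over $\KF^\circ$: the $T$-quoted forms of all initial sequents are $\KF^\circ$-derivable, and every rule of $\PKF^\circ$ becomes a $\KF^\circ$-derivable rule once each of its sequents is replaced by its starred translation. This follows and generalises the embedding strategy of \cite{hahAxiomatizing} and \cite{nicProvably} to all the theory-pairs of Definition~\ref{pairs}.

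Before the induction I would isolate two facts about the \KF-like theories that carry most of the weight. The first is \emph{arithmetical transparency}: for every $\varphi(\vec v)\in\lpa$ one has $\KF^\circ\vdash\varphi(\vec x)\sse T\cor{\varphi(\dot{\vec x})}$, which lets me trade any $T$-free side condition---$\Sent$, $\ClTerm$, $\val(\cdot)=\val(\cdot)$, or the $\prec$-clauses---for its unstarred counterpart; this holds because \KF is transparent on its $T$-free fragment. The second is the \emph{self-iteration} fact supplied by $T\textnormal{\sf rpL}/T\textnormal{\sf rpR}$ together with the regularity axioms \textnormal{\sf Reg1}--\textnormal{\sf Reg2}, namely $\KF^\circ\vdash T\cor{T(\dot x)}\sse T(x)$ and, more generally, that a closed term may be replaced under the truth predicate by the numeral of its value. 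Together these reduce all nested-$T$ and numeral/closed-term bookkeeping to the primitive truth rules.

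With these in hand the induction is a rule-by-rule match. The logical initial sequents $\varphi,\GD,\varphi$ translate to classical identity axioms of $\KF^\circ$; the \PA-axiom sequents $\GD,\varphi$ and $\neg\varphi,\GD$ follow from arithmetical transparency and the fact that $\KF^\circ$ proves every \PA-axiom; the identity rules and $=\neg\textnormal{L}/=\neg\textnormal{R}$ reduce to classical identity together with $(T\neg=)$; and Cut translates to Cut, which $\KF^\circ$ admits. Each propositional and quantifier rule of \FDE translates to the matching compositional truth rule of $\KF^\circ$: $(\land\textnormal{R})$ to $T\textnormal{\sf andR}$, $(\neg\neg\textnormal{R})$ to $T\textnormal{\sf dnR}$, $(\forall\textnormal{R})$ to $T\textnormal{\sf allR}$ (first generalising the eigenvariable by a classical $(\forall\textnormal{R})$), the eigenvariable and freshness conditions surviving intact, with the negated connectives handled by $T\textnormal{\sf nand}$ and $T\textnormal{\sf nall}$. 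The variant-specific rules map onto the variant-specific axioms column by column: $(\neg\textnormal{L})$ is justified by \Cons, $(\neg\textnormal{R})$ by \Comp, and $(\GG)$ by \GoG. The three induction regimes match row by row: the starred form of full $(\textnormal{\sf IND})$ is an instance of $(\textnormal{\sf IND}^{\textnormal{\sf int}})$ of \KFint, the starred form of arithmetical $(\textnormal{\sf IND})$ reduces by transparency to arithmetical $(\textnormal{\sf IND})$ in $\KF\har$, and the starred $(\TI^{<\varepsilon_0})$ is available because \KF proves transfinite induction up to $\varepsilon_0$ for all $\lpat$-formulae (after commuting $T$ past the bounded quantifier).

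The main obstacle is deriving the starred truth \emph{axioms} and starred truth \emph{rules} of $\PKF^\circ$. Because the translation dots every free variable, a principal formula such as $T(x\ud\land y)$ becomes $T\cor{T(\dot x\ud\land\dot y)}$, and one must carefully track, via $T\textnormal{\sf rp}$ and \textnormal{\sf Reg1}--\textnormal{\sf Reg2}, how the outer truth predicate interacts with the coded connective and with the numeral substituted for the value of a closed term, while simultaneously reconciling the $T$-free premises and side conditions through arithmetical transparency. Making these nested cases line up uniformly across all the theory-pairs, without disturbing the eigenvariable conditions, is where the delicate (if ultimately routine) work lies; I expect every remaining case to be a direct translation.
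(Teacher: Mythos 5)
Your proposal is correct and follows essentially the same route as the paper: the paper's own proof simply cites the height-of-derivation/rule-by-rule embedding already carried out by \cite{hahAxiomatizing}, \cite{hanCosts} and \cite{nicProvably} for the pairs without index {\sf cs} or {\sf cp}, and only adds the observation that \Cons and \Comp let $\KF^\circ$ internalize the soundness of ($\neg$L) and ($\neg$R) respectively---exactly the case analysis you give for the variant-specific rules. Your expanded treatment of the initial sequents, the compositional rules, and the three induction regimes is just the content of that cited embedding spelled out, so nothing is missing and nothing is genuinely different.
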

Proposition \ref{le_H&HNExt} is essentially due to \cite{hahAxiomatizing}, \cite{hanCosts} and \cite{nicProvably}, who proved the claim for theories without index {\sf cs} or {\sf cp}, that is, for pairs of theories that do not assume the truth predicate to be consistent or complete. It thus suffices to extend their result to these theories.

\begin{proof}[Proof of Proposition \ref{le_H&HNExt}]
	The proof is straightforward.\footnote{Notice that the use of rules for identity instead of identity axioms does not impact the arguments due to \cite{hahAxiomatizing}, \cite{hanCosts} and \cite{nicProvably}.} For pairs extended with a consistency principle, it suffices to show that the \KF-theory ``internalizes'' the soundness of ($\neg$L). That is, it suffices to show that, e.g., if $\KFcs\vdash T\cor{\Gamma} \Then T\cor{\Delta}, T\cor\varphi$, then $\KFcs\vdash T\cor{\neg\varphi}, T\cor\Gamma \Then T\cor{\Delta}$. Symmetrically for theories extended with a completeness principle one needs to show that the \KF-theory ``internalizes'' the soundness of ($\neg$R).
\end{proof}
In light of the definition of $\KF^\circ\Ssf\Isf$, Proposition \ref{le_H&HNExt} immediately yields that the provable inferences of $\PKF^\circ$ are a subset $\KF^\circ\Ssf\Isf$:

\begin{corollary}\label{co_ILPKFSubILKF}
	$\PKF^\circ\Sub\KF^\circ\Ssf\Isf$.
\end{corollary}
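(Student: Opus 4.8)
The plan is to obtain the inclusion directly from Proposition~\ref{le_H&HNExt} by unwinding the definition of the significant inferences; no fresh derivation is required. First I would fix the reading of the two sides. Following the transparency observation after Definition~\ref{df_SignificantInferences}, the left-hand side $\PKF^\circ$ is identified with its set of provable inferences, i.e.~with $\{\ang{\Gamma, \Delta}\mid\PKF^\circ\vdash\GD\}$ for $\Gamma, \Delta$ finite sets of \lpat-sentences; and by Definition~\ref{df_SignificantInferences} the right-hand side is $\KF^\circ\Ssf\Isf=\{\ang{\Gamma, \Delta}\mid\KF^\circ\vdash\TGD\}$. So the inclusion to be established is precisely the implication: if $\PKF^\circ\vdash\GD$ for sentences $\Gamma, \Delta$, then $\KF^\circ\vdash\TGD$.

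The single substantive step is to recognise this implication as the sentence-level instance of Proposition~\ref{le_H&HNExt}. Since $\Gamma$ and $\Delta$ consist of sentences, the variable-tuples $\vec{x}, \vec{y}$ in the Proposition may be taken empty, so $\GDv$ collapses to $\GD$ and $\TGDv$ collapses to $\TGD$. Applying the Proposition therefore yields $\KF^\circ\vdash\TGD$; re-reading Definition~\ref{df_SignificantInferences} then places $\ang{\Gamma, \Delta}$ in $\KF^\circ\Ssf\Isf$, and since the inference was arbitrary the inclusion follows.

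I do not expect a genuine obstacle at this stage: all the content lives in Proposition~\ref{le_H&HNExt}, which is in turn attributed to Halbach--Horsten, Halbach--Nicolai and Nicolai for the index-free pairs and extended to the \textsf{cs}/\textsf{cp} variants via the internalised soundness of ($\neg$L) and ($\neg$R). The only point deserving a moment's attention is the harmless passage from the free-variable formulation of the Proposition to the sentence-level statement of the Corollary, which is immediate because sentences are exactly the case of empty variable-tuples.
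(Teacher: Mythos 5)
Your proposal is correct and matches the paper exactly: the paper states this corollary without a separate proof, treating it as an immediate consequence of Proposition~\ref{le_H&HNExt} once $\PKF^\circ$ is read as its set of provable inferences and $\KF^\circ\Ssf\Isf$ is unwound via Definition~\ref{df_SignificantInferences}. Your only added remark, the passage from the free-variable formulation to the sentence-level case via empty variable tuples, is a harmless and accurate piece of bookkeeping.
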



\subsection{From $\KF^\circ$-significant inferences to $\PKF^\circ$-provable sequents}\label{subsec:KFinPKF}
The proof of the converse directions of Proposition \ref{le_H&HNExt} and Corollary \ref{co_ILPKFSubILKF} constitutes the main technical contribution of this article. The basic idea underlying the proof is to show by induction on the height of $\KF^\circ$-derivations that if $\KF^\circ\vdash\TGD$ then $\PKF^\circ\vdash\TGD$. This will be shown by proving a stronger claim, i.e. it will be shown that, whenever $\Gamma, \Delta$ contain only literals
	\[
		\KF^\circ\vdash\GD \ \text{ implies } \ \PKF^\circ\vdash \Gamma^+,\Delta^-\Rightarrow\Gamma^-,\Delta^+
	\]
where for $\Theta\in\{\Gamma, \Delta\}$ and {\sf At} the set of atomic sentences,
	\begin{IEEEeqnarray*}{lr}
		\Theta^+:=\{\varphi\in\textsf{At}\mid\varphi\in\Theta\} & \hspace{6mm} \Theta^-:=\{\varphi\in\textsf{At}\mid\neg\varphi\in\Theta\}\footnote{%
			In other words, $\psi\in\Theta^-$ iff $\neg\psi\in\Theta$.
		}
	\end{IEEEeqnarray*} 
This transformation is motivated by (i) the fact that identity behaves classically in $\PKF^\circ$ and (ii) the following semantic consideration: if a formula of the form $\neg Tt$ is classically false (true), then $Tt$ is either true (false) or both (neither) from the perspective of the non-classical theory of truth. As a consequence, moving $Tt$ in the succedent (antecedent) of the sequent will not interfere with the validity of the sequent from the perspective of the non-classical logics at stake. Moreover, if  $\GD$ is of the form $T\cor{\Gamma'}\Rightarrow T\cor{\Delta'}$, the transformation leaves the sequent unaltered, hence, if we prove that for each $\KF^\circ$-provable sequent its transformation is $\PKF^\circ$-provable, we obtain our desired result as a corollary.

\begin{lemma}[Main Lemma]\label{le_TKFintSubPKF}
	For $\Gamma, \Delta$ sets of literals, for all $\star$
	\[
		\KFintst\vdash\GD \ \text{ implies } \PKFst\vdash\Gamma^+, \Delta^-\Then\Gamma^-, \Delta^+.
	\]
\end{lemma}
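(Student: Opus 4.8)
The plan is to argue by induction on the height of a \KFintst-derivation, which by Proposition~\ref{pr_CutEliminationKFint} may be taken \emph{quasi-normal}, so that every cut is on a literal. The induction hypothesis is maintained for \emph{literal} endsequents, with target the transform $\Gamma^+,\Delta^-\Then\Gamma^-,\Delta^+$; this transform is exactly what absorbs the one structural discrepancy between the two calculi, namely that the classical base lets negated atoms cross the turnstile through the unrestricted $(\neg\textnormal{L})/(\neg\textnormal{R})$ whereas $\FDE$ does not. The organising step is a generalised subformula analysis: the only non-literal formulae that can occur in a quasi-normal derivation of a literal endsequent are (i) complex \lpa-formulae, stemming from the \PA-axioms and the $T$-free logical rules, and (ii) the active formulae $\forall z\,T(\dots)$, $\forall z\,\neg T(\dots)$ and $\neg\forall z\,\neg T(\dots)$ of the quantified truth rules; in particular every occurrence of the predicate $T$ inside a non-literal formula sits within one of the three forms in (ii). Part (i) is absorbed wholesale, since \PKFst is classical on \lpa{} (Lemma~\ref{le_PKF}) and the transform of an \lpa-sequent is classically equivalent to it; so the genuine work concerns the $T$-literals and the three forms in (ii).

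For the rules with literal premises the transform trivialises most cases. A logical initial sequent $\varphi,\GD,\varphi$ with $\varphi$ a literal maps to an \FDE-initial sequent; a truth axiom of Df.~\ref{df_TruthAxioms} maps either to the matching \PKF-axiom or, when the transform displaces a negated \lpa-atom, to a sequent recovered from it by moving that atom back through the classically behaving identity of \PKF (the rules $({=}\neg\textnormal{L})$, $({=}\neg\textnormal{R})$ and Lemma~\ref{le_PKF}). The rules $(\neg\textnormal{L})/(\neg\textnormal{R})$ applied to an \emph{atom} become, after transforming premise and conclusion, the identity on sequents, hence are immediate; and $(\textsf{IND}^{\textsf{int}})$ is recovered from the schema $(\textsf{IND})$ of \PKFst instantiated at the \lpat-formula $T(\subst(x,\num(u),\cor{v}))$ with $x$ a parameter, using transparency (Lemma~\ref{le_PKF}(ii)). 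The $\star$-specific initial sequents reduce, by a cut against $(\neg T\neg)(i)/(ii)$, to purely literal facts of the matching \PKFst-variant: \textsf{Cons} to the explosion $T(x),\neg T(x)\Then$ granted by $(\neg\textnormal{L})$ in \PKFcs, \textsf{Comp} dually to $\Then T(x),\neg T(x)$ from $(\neg\textnormal{R})$ in \PKFcp, and \GoG to the corresponding consequence of $(\GG)$ in \PKFs.

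The substantive cases are the four quantified truth rules and the single complex use of classical negation, the passage between $\forall z\,\neg T(\dots)$ and $\neg\forall z\,\neg T(\dots)$. Here the relevant premise is not literal, so the hypothesis does not apply to it directly. The plan is to peel off the outer quantifier and negations by height-preserving inversion --- Proposition~\ref{pr_HPInversion} for the $(\forall)$-layers, together with the equally height-preserving invertibility of $(\neg\textnormal{L})/(\neg\textnormal{R})$ in the classical base --- until one reaches a literal sequent whose only new formula is a $T$-literal; the hypothesis applies there, at no greater height. One then rebuilds the principal formula \emph{inside} \PKFst, using the \FDE rules $(\neg\neg)$, $(\neg\forall)$ and $(\forall)$, and finally applies the shared truth rule. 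For $(T\textsf{allR})$, inversion gives $\Sent(\ud\forall v.x),\GD,T(x[\dot u/v])$, to which the hypothesis and then $(\forall\textnormal{R})$ and $(T\textsf{allR})$ yield the transformed conclusion; for $(T\textsf{nallL})$ one descends to $\Sent(\ud\forall v.x),\GD,\neg T(\ud\neg x[\dot u/v])$, applies the hypothesis, and reassembles the left-hand $\neg\forall z\,\neg T(\ud\neg x[\dot z/v])$ by $(\neg\neg\textnormal{L})$ and $(\neg\forall\textnormal{L})$ before the final $(T\textsf{nallL})$.

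I expect the quantified cases to be the main obstacle. The transform is built to shuttle a top-level $\neg T$-atom across the turnstile, but inside $\forall z\,\neg T(\dots)$ and $\neg\forall z\,\neg T(\dots)$ such atoms sit frozen beneath a quantifier, so the hypothesis cannot be pushed through a truth rule naively: it would already have relocated the very atom that the quantifier binds. Making the peel-to-a-$T$-literal-then-rebuild strategy go through therefore requires checking, simultaneously, that the eigenvariable conditions survive the inversion detour, that heights strictly decrease so the outer induction stays well-founded, and --- the delicate point --- that the \FDE reassembly lands on \emph{exactly} the sequent the transform assigns to the \KFintst-conclusion, uniformly across the four values of $\star$ (the symmetric variant leaning additionally on admissibility of contraposition, Lemma~\ref{le_Contraposition}). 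Once this is secured, the literal instances $\Gamma\Then\Delta$ of the statement follow; and since the transform fixes every sequent of the form $T\cor{\Gamma}\Then T\cor{\Delta}$, the intended comparison of the significant inferences of \KFintst with \PKFst-provability drops out as a corollary.
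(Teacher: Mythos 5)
Your proposal follows essentially the same route as the paper's proof: induction on the height of a quasi-normal derivation, the signed transform $\Gamma^+,\Delta^-\Then\Gamma^-,\Delta^+$ to absorb negated atoms, direct treatment of the rules with literal active formulae, reduction of \Cons, \Comp\ and \GoG to the matching structural rules ($\neg$L), ($\neg$R), (\GG) of the corresponding \PKF-variant via $(\neg T\neg)$, and height-preserving inversion followed by reassembly with $(\neg\neg)$/$(\neg\forall)$/$(\forall)$ for the quantified truth rules. The points you flag as delicate (eigenvariable conditions, height decrease, landing on the exact transformed sequent) are handled in the paper exactly as you anticipate, so no further comparison is needed.
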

\begin{remark} In the following proof we implicitly use the facts that
	\begin{itemize}
		\item[$\bigcdot$] the replacement schema $s=t, \varphi(t), \GD, \varphi(s)$ is derivable in \PKF-like systems;
		\item[$\bigcdot$] if $\KFintst\vdash^n\GD,\neg\forall v\varphi$, then $\KFintst\vdash^n\forall v\varphi, \GD$, and if $\KFintst\vdash^n\neg\forall v\varphi, \GD$, then $\KFintst\vdash^n\GD, \forall v\varphi$.\footnote{%
	Note that $\neg\forall v\varphi$ can be principal on the right (left) only if it has been derived via $\neg$R ($\neg$L).
}
	\end{itemize}
\end{remark}
\begin{proof}[Proof of Lemma~\ref{le_TKFintSubPKF}]
	The proof is by induction on the height $n$ of \KFintst-derivations.
\vspace{1mm}
\\
$\boxed{n=0}$ \hspace{1mm} Suppose \GD is a \KFintst-initial sequent. For the pair \KFint-\PKF, we first notice that every axiom of \KFint is an axiom of \PKF. Hence for initial sequents not involving negated atomic formulae, the proof is immediate. The only truth-theoretic axiom of \KFint involving a negated atomic formula is ($T\neg=$). We obtain the desired conclusion by application of Lemma~\ref{le_PKF}(iii), e.g.
	\begin{prooftree}
		\AxC{$\ClTerm(x), \ClTerm(y), \neg(\val(x)=\val(y)), \Gamma^+, \Delta^-\Then\Gamma^-, \Delta^+, T(\ud\neg(x\ud = y))$}
		\RL{Lemma~\ref{le_PKF}(iii)}
		\UnC{$\ClTerm(x), \ClTerm(y), \Gamma^+, \Delta^-\Then\Gamma^-, \Delta^+, T(\ud\neg(x\ud = y)), \val(x)=\val(y)$}
	\end{prooftree}
$\boxed{\Cons, \Comp, \textsf{GoG}}$ \hspace{1mm} As for theories with specific $T$-axioms, we want to show (we omit context for readability)
	\begin{IEEEeqnarray}{C}
		\PKFcp\vdash \Sent(x)\Then T(x), T(\ud\neg x) \label{ty'} \\
		\PKFcs\vdash \Sent(x), T(x), T(\ud\neg x) \Then \label{tyy'} \\
		\PKFs\vdash \Sent(x), \Sent(y), T(x), T(\ud\neg x) \Then T(y), T(\ud\neg y) \label{tyyy'}
	\end{IEEEeqnarray}
For (\ref{ty'}) 
	\begin{prooftree}
	\def\fCenter{\mbox{ $\Then$ }}
		\Ax$\Sent(x), \neg T(x) \fCenter T(\ud\neg x)$
		\RL{$\neg$R}
		\Un$\Sent(x) \fCenter T(\ud\neg x), \neg\neg T(x)$
		\Ax$\Sent(x), T(x) \fCenter T(x)$
		\RL{$\neg\neg$L}
		\Un$\Sent(x), \neg\neg T(x) \fCenter T(x)$
		\RL{Cut}
		\Bn$\Sent(x) \fCenter T(x), T(\ud\neg x)$
	\end{prooftree}
For (\ref{tyy'}) we have
	\begin{prooftree}
	\def\fCenter{\mbox{ $\Then$ }}
		\Ax$\Sent(x), T(x) \fCenter T(x)$
		\RL{$\neg$L}
		\Un$\Sent(x), T(x), \neg T(x) \fCenter$
		\Ax$\Sent(x), T(\ud\neg x) \fCenter \neg T(x)$
		\RL{Cut}
		\Bn$\Sent(x), T(x), T(\ud\neg x) \fCenter$
	\end{prooftree}
Finally, for (\ref{tyyy'})
	\begin{prooftree}
	\def\fCenter{\mbox{ $\Then$ }}
		\Ax$\Sent(x), \Sent(y), T(y) \fCenter, T(y) \neg T(y)$
		\Ax$\Sent(x), \Sent(y), \neg T(y) \fCenter T(y), \neg T(y)$
		\RL{\GG}
		\Bn$\Sent(x), \Sent(y), T(x), \neg T(x) \fCenter T(y), \neg T(y)$
		\RL{$\neg T\neg$}
		\Un$\Sent(x), \Sent(y), T(x), T(\ud\neg x) \fCenter T(y), T(\ud\neg y)$
	\end{prooftree}
$\boxed{n=m+1}$ \hspace{1mm} Suppose \GD has been derived. We distinguish two cases: either \GD contains a principal formula, or it contains no principal formula. If the latter, then it has been derived either by (Ref), in which case we just apply i.h. and (Ref) in \PKFst, or it has been derived by 
\vspace{1mm}
\\
$\boxed{\text{Cut}}$ \hspace{1mm} For this to work, it is crucial that we are dealing with quasi-normal derivations. In this case (Cut) is applied to literals only and we can just apply (Cut) in \PKFst. If $\varphi\in\textsf{At}$, we have
	\begin{prooftree}
	\def\fCenter{\mbox{ $\Gamma^+, \Delta^-\Then\Gamma^-, \Delta^+$}}
		\AxC{}
		\RL{i.h.}
		\branchDeduce
		\Deduce$\fCenter, \varphi$
		\AxC{}
		\RL{i.h.}
		\branchDeduce
		\Deduce$\varphi, \fCenter$
		\RL{Cut}
		\Bn$\fCenter$
	\end{prooftree}
If $\varphi\equiv\neg\psi\in\textsf{At}^-$, where $\textsf{At}^-$ is the set of negated atomic formulae, then we use i.h. and cut on $\psi$.
\vspace{1mm}
\\
Now suppose $\GD$ contains a principal formula. The rules having literals as principal formulae are the following: ($\neg$L), ($\neg$R), (RepL), ({\sf IND}$^\textsf{int}$), and truth-rules.
\vspace{1mm}
\\
$\boxed{\neg$L$, \neg$R$}$ \hspace{1mm} Immediate, e.g. suppose the \KFintst-derivation ends with
	\begin{prooftree}
	\def\fCenter{\mbox{ $\GpD$}}
		\AxC{}
		\branchDeduce
		\Deduce$\fCenter, \varphi$
		\RL{$\neg$L}
		\Un$\neg\varphi, \fCenter$
	\end{prooftree}
If $\varphi\in\textsf{At}$, then by induction we have $\PKFst\vdash\Gamma'^+, \Delta^-\Then\Gamma'^-, \Delta^+, \varphi$, which is our desired conclusion. The case where $\varphi\equiv\neg\psi\in\negat$ need not be taken into account, as we are dealing with derivations containing only literals in the end-sequent.
\vspace{1mm}
\\
$\boxed{\text{RepL}}$ \hspace{1mm} Suppose the \KFintst-derivation ends with
	\begin{prooftree}
	\def\fCenter{\mbox{ $\GpD$}}
		\AxC{}
		\branchDeduce
		\Deduce$\varphi(t), \fCenter$
		\Un$s=t, \varphi(s), \fCenter$
	\end{prooftree}
Suppose first that $\varphi\in\textsf{At}$. For the pairs \KFint-\PKF, \KFintcs-\PKFcs, and \KFints-\PKFs, we just apply i.h. and (RepL) in the \PKF-variant. For the pair \KFintcp-\PKFcp, we reason in \PKFcp as follows
	\begin{prooftree}
		\AxC{}
		\RL{Rep Schema}
		\UnC{$s = t, \varphi(s), \Gamma'^+, \Delta^-\Then\Gamma'^-, \Delta^+, \varphi(t)$}
		\AxC{}
		\RL{i.h.}
		\shortDeduce
		\DeduceC{$\varphi(t), \Gamma'^+, \Delta^-\Then\Gamma'^-, \Delta^+$}
		\RL{Cut}
		\BnC{$s=t, \varphi(s), \Gamma'^+, \Delta^-\Then\Gamma'^-, \Delta^+$}
	\end{prooftree}
Now suppose $\varphi\equiv\neg\psi\in\textsf{At}^-$. We first reason in an arbitrary \PKFst
	\begin{prooftree}
		\AxC{}
		\RL{i.h.}
		\shortDeduce
		\DeduceC{$\Gamma'^+, \Delta^-\Then\Gamma'^-, \Delta^+, \psi(t)$}
		\AxC{}
		\RL{Rep Schema}
		\UnC{$s=t, \psi(t), \Gamma'^+, \Delta^-\Then\Gamma'^-, \Delta^+, \psi(s)$}
		\RL{Cut}
		\BnC{$s=t, \Gamma'^+, \Delta^-\Then\Gamma'^-, \Delta^+, \psi(s)$}
	\end{prooftree}
\vspace{1mm}
$\boxed{\textsf{IND}^\textsf{int}}$ \hspace{1mm} Suppose the \KFintst-derivation ends with
	\begin{prooftree}
		\Ax$\Fml_1(x), T(x[\dot{u}/v]), \fCenter, T(x[\dot{u'}/v])$
		\RL{{\sf IND}$^\textsf{int}$}
		\Un$T(x[\bo 0/v]), \fCenter, T(x[\dot{z}/v])$
	\end{prooftree}
with $u$ eigenvariable and $z$ arbtrary term. We use ({\sf IND}) in \PKFst as follows
	\begin{prooftree}
	\def\fCenter{\mbox{ $\Gamma^+, \Delta^-\Then\Gamma^-, \Delta^+$}}
		\AxC{}
		\RL{i.h}
		\shortDeduce
		\Deduce$\Fml_1(x), T(x[\dot{u}/v]), \fCenter, T(x[\dot{u'}/v])$
		\RL{{\sf IND}}
		\Un$T(x[\bo 0/v]), \fCenter, T(x[\dot{z}/v])$
	\end{prooftree}
Logical rules for $\land$ and $\forall$ need not be taken into account as they cannot be the last rule of a quasi-normal derivation \Dmc that has in the end-sequent. We end the proof by dealing with truth-rules. We first note that, with the exception of truth-rules for the universal quantifier, all other truth-rules have only literals as active formulae. Hence, it suffices to apply i.h. and the rule itself in \PKFst. As for rules involving $\forall$, we exploit height-preserving inversion (see Proposition~\ref{pr_HPInversion}). For example
\vspace{1mm}
\\
$\boxed{T\text{\sf all}\text{R}}$ \hspace{1mm} Suppose the derivation end with 
	\begin{prooftree}
	\def\fCenter{\mbox{ $\GDp$}}
		\Ax$\fCenter, \forall z T(x[\dot{z}/v])$
		\RL{T{\sf nall}R}
		\Un$\fCenter, T(\ud\forall v.x)$
	\end{prooftree}
By inversion, from $\KFintst\vdash^n\GD, \forall z T(x[\dot{z}/v])$, we get $\KFintst\vdash^n\GDp, Tx[\dot{u}/v]$ for $u$ eigenvariable. Using i.h., we then reason in \PKFst as follows
	\begin{prooftree}
	\def\fCenter{\mbox{ $\Gamma^+, \Delta'^-\Then\Gamma^-, \Delta'^+$}}
		\AxC{}
		\RL{i.h.}
		\shortDeduce
		\Deduce$\fCenter, Tx[\dot{u}/v]$
		\RL{$\forall$R}
		\Un$\fCenter, \forall z Tx[\dot{u}/v]$
		\RL{$T${\sf all}R}
		\Un$\fCenter T(\ud\forall v.x)$
	\end{prooftree}
A similar argument works for $T\textsf{all}L$.
\vspace{1mm}
\\
$\boxed{T\text{\sf nall}\text{R}}$ \hspace{1mm} Suppose the derivation ends with 
	\begin{prooftree}
	\def\fCenter{\mbox{ $\GDp$}}
		\Ax$\fCenter, \neg\forall z\neg T(\ud\neg x[z/v])$
		\RL{$T${\sf nall}R}
		\Un$\fCenter, T(\ud\neg\ud\forall v.x)$
	\end{prooftree}
From $\KFintst\vdash^n\GDp,\neg\forall z\neg T(\ud\neg x[\dot{z}/v])$, we get $\KFintst\vdash^n\forall z\neg T(\ud\neg x[\dot{z}/v]), \GDp$ and by inversion we get $\KFintst\vdash^n\neg T(\ud\neg x[\dot{y}/v]), \GDp$, for some $y$. Using i.h., we then reason in \PKFst as follows
	\begin{prooftree}
	\def\fCenter{\mbox{ $\Gamma^+, \Delta'^-\Then\Gamma^-,\Delta'^+$}}
		\Ax$\fCenter, T(\ud\neg x[\dot{y}/v])$
		\RL{$\neg\neg$R}
		\Un$\fCenter, \neg\neg T(\ud\neg x[\dot{y}/v])$
		\RL{$\neg\forall$R}
		\Un$\fCenter, \neg\forall z \neg T(\ud\neg x[\dot{z}/v])$
		\RL{$T${\sf nall}R}
		\Un$\fCenter, T(\ud\neg\ud\forall v.x)$
	\end{prooftree}
\end{proof}

By inspecting the proof of the Main Lemma it is immediate that we can lift the lemma to the pair  $\KFintst\har$ and $\PKFst\har$, that is, we obtain the following

\begin{corollary}\label{TKFrestrPKFrestr} For $\Gamma, \Delta$ sets of literals, for all $\star$
		\[
		 	\KFintst\har\vdash\GD \ \text{ implies } \ \PKFst\har\vdash\Gamma^+,\Delta^-\Then\Gamma^-, \Delta^+.
		 \]
\end{corollary}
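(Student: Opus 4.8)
The plan is to re-run, case by case, the induction on derivation height that establishes the Main Lemma (Lemma~\ref{le_TKFintSubPKF}). The guiding observation is that, by Fact~\ref{ob_Embedding} and Definition~\ref{df_PKFCluster}, $\KFintst\har$ differs from $\KFintst$ \emph{only} by replacing $(\textsf{IND}^{\textsf{int}})$ with its restriction $(\textsf{IND}^{\textsf{int}}\har\lpa)$, and $\PKFst\har$ differs from $\PKFst$ \emph{only} by confining $(\textnormal{\sf IND})$ to $\lpa$-formulae; the initial sequents, the truth axioms and truth-rules, the identity rules, and the propositional and quantifier rules all coincide across the two pairs. Consequently every case in the proof of Lemma~\ref{le_TKFintSubPKF} that does not pass through the induction rule carries over verbatim: the base case $n=0$, including the translations witnessing $\Cons$, $\Comp$ and $\GoG$ and the appeal to Lemma~\ref{le_PKF}(iii) for $(T\neg{=})$; the $\textnormal{Cut}$ case (which again uses only quasi-normality); the $\neg\textnormal{L}$, $\neg\textnormal{R}$ and $\textnormal{RepL}$ cases; and the truth-rule cases together with the height-preserving inversions applied to $T\textsf{all}$ and $T\textsf{nall}$. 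I would first record that the two structural tools on which the argument rests, quasi-normalisation (Proposition~\ref{pr_CutEliminationKFint}) and inversion (Proposition~\ref{pr_HPInversion}), are insensitive to which instances of the induction rule are admitted, so both remain available for $\KFintst\har$.

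This leaves a single case to re-examine, the induction rule. Suppose a $\KFintst\har$-derivation ends with an application of $(\textsf{IND}^{\textsf{int}}\har\lpa)$, with premise $\Fml_{\lpa}(x), T(x[\dot u/v]), \Gamma\Then\Delta, T(x[\dot{u'}/v])$ and conclusion $T(x[\bo 0/v]), \Gamma\Then\Delta, T(x[\dot z/v])$, where $u$ is the eigenvariable. Mirroring the Main Lemma, I would apply the induction hypothesis to the premise and then drive the conclusion by an application of $(\textnormal{\sf IND})$ in $\PKFst\har$, inducting through $u$. The point requiring care is that $(\textnormal{\sf IND})$ in $\PKFst\har$ is licensed only for $\lpa$-formulae, whereas the formula $T(x[\dot u/v])$ inducted on in the unrestricted argument carries the truth predicate; so, unlike in the setting of Lemma~\ref{le_TKFintSubPKF}, where full $(\textnormal{\sf IND})$ was at hand, one cannot simply replay the same induction. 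The work therefore concentrates on showing that the side condition $\Fml_{\lpa}(x)$, which guarantees that $x$ codes a $T$-free formula, confines the internal induction to arithmetical content and so lets it be reproduced inside the $\lpa$-restricted induction of $\PKFst\har$, the $T$-layer being reinstated afterwards through the compositional truth-rules and, where needed, the interderivability of Lemma~\ref{le_PKF}(ii).

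I expect this matching to be the main—indeed the only—obstacle: the genuine content of the corollary lies exactly in the alignment of the two restrictions, internal induction confined to $\lpa$-coded $x$ on the $\KFint$-side being mirrored by arithmetical external induction on the $\PKF$-side, which is where the proof-theoretic equivalence between the two $\har$-theories is localised. Once that step is secured the remainder is routine bookkeeping that is uniform in $\star$, since the $\star$-specific initial sequents $\Cons$, $\Comp$ and $\GoG$ never interact with the induction rule, and the corollary then follows immediately from the inspection of Lemma~\ref{le_TKFintSubPKF}.
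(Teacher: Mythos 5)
Your overall strategy --- re-running the height induction of Lemma~\ref{le_TKFintSubPKF} and checking that every case survives the restriction --- is exactly what the paper does: its entire proof of this corollary is the remark that ``by inspecting the proof of the Main Lemma it is immediate'' that the argument lifts to the pair $\KFintst\har$, $\PKFst\har$. Your accounting of the cases that transfer verbatim (axioms, the $\star$-specific initial sequents, Cut via quasi-normality, the identity and negation rules, the truth-rules with inversion) is accurate, and you are right that Propositions~\ref{pr_CutEliminationKFint} and \ref{pr_HPInversion} are unaffected by which instances of induction are admitted. Where you part company with the paper is the induction case: the Main Lemma simulates $(\mathsf{IND}^{\mathsf{int}})$ by a single application of the $\PKF$-side rule $(\mathsf{IND})$ to the very formula $T(x[\dot u/v])$ occurring in the premise, and the paper's ``by inspection'' presupposes that the corresponding one-step simulation of $(\mathsf{IND}^{\mathsf{int}}\har\lpa)$ is likewise available in $\PKFst\har$. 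You instead single this case out as ``the genuine content of the corollary'' --- and then do not prove it.

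That is a real gap, and the repair you gesture at would not close it. Lemma~\ref{le_PKF}(ii) trades $\psi(\vec x)$ for $T\cor{\psi(\dot{\vec x})}$ only for a \emph{fixed} standard formula $\psi$; in $(\mathsf{IND}^{\mathsf{int}}\har\lpa)$ the code $x$ is a free variable of the sequent, so there is no fixed $\psi$ to which that lemma applies, and the compositional truth-rules give you no way to ``reinstate the $T$-layer'' uniformly in $x$. Nor can the guard $\Fml_{\lpa}(x)$ ``confine the induction to arithmetical content'': that would require an $\lpa$-formula $\sigma(x,u)$ provably equivalent in $\PKFst\har$ to $T(x[\dot u/v])$ for all codes $x$ of $\lpa$-formulae, i.e.\ a uniform arithmetical truth definition for the whole of $\lpa$, which Tarski's theorem excludes; and the stratified surrogates $\mathrm{Tr}_n$ cannot be proved adequate in $\PKFst\har$ without induction on $T$-containing formulae, which is precisely what the $\har$ restriction removes. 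So the one step you identify as carrying the weight of the corollary is neither carried out nor reachable by the means you name. To be in step with the paper you must either argue that the case is handled exactly as in the Main Lemma, by mirroring $(\mathsf{IND}^{\mathsf{int}}\har\lpa)$ with the induction available on the $\PKFst\har$ side --- the reading on which the corollary really is immediate --- or, if you maintain that the $\lpa$-restriction on $(\mathsf{IND})$ blocks that move, supply an actual derivation of the rule $(\mathsf{IND}^{\mathsf{int}}\har\lpa)$ in $\PKFst\har$. The proposal as written does neither.
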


We also note that the Lemma~\ref{le_TKFintSubPKF} can be formalized in \PKFst, which will prove important when we consider \KF-systems with full induction. The relation $\KFintst\vdash^n_\rho\GD$ expressing that \GD is derivable in \KFintst with a derivation of height $\leq n$ and cut rank $\leq\rho$ can be canonically represented in arithmetic via a recursively enumerable predicate $\Bew_{\KFintst}(\bo n, \bo r, \cor{\GD})$.

\begin{corollary}\label{co_TKFintSubPKF} For $\Gamma, \Delta$ sets of literals, for all $\star$
\begin{align*} \PKF_\star\vdash\Bew_{\KFintst}(\bo n, \bo 0, \cor{\GD})&\text{ implies }\PKF_\star\vdash\Gamma^+, \Delta^-\Then\Gamma^-, \Delta^+.\end{align*}
\end{corollary}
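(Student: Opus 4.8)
The statement is the arithmetization of the Main Lemma, so the plan is to observe that the proof of Lemma~\ref{le_TKFintSubPKF} is entirely effective and verifiable in $\PA$, and then to combine this with the arithmetical soundness of \PKFst. Fix a primitive recursive proof relation $\Prf_{\KFintst}(d,n,r,x)$ expressing that $d$ codes a \KFintst-derivation of the sequent coded by $x$ of height $\le n$ and cut rank $\le r$, so that $\Bew_{\KFintst}(\bo n,\bo r,\cor{\GD})$ abbreviates $\exists d\,\Prf_{\KFintst}(d,\bo n,\bo r,\cor{\GD})$; let $\Prf_{\PKFst}$ and $\Bew_{\PKFst}$ be the analogous predicates for \PKFst.

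First I would define, in the metatheory, a primitive recursive function $\mathsf{tr}$ mirroring the case analysis of Lemma~\ref{le_TKFintSubPKF}: given a code $d$ of a quasi-normal \KFintst-derivation of a sequent \GD with $\Gamma,\Delta$ sets of literals, $\mathsf{tr}(d)$ returns a code of a \PKFst-derivation of $\Gamma^+,\Delta^-\Then\Gamma^-,\Delta^+$. The recursion follows the height: at height $0$ it outputs the fixed \PKFst-derivations exhibited in the base case (including the $(T\neg=)$-case via Lemma~\ref{le_PKF}(iii) and the \Cons, \Comp, \GoG-cases); at a Cut it recurses on both premises and reapplies Cut, which is legitimate precisely because cut rank $0$ forces the cut formula to be a literal; for $(\neg\text{L})$, $(\neg\text{R})$, $(\text{RepL})$, $(\textsf{IND}^{\textsf{int}})$ and the propositional truth-rules it recurses and reapplies the matching \PKFst-rule (using $(\textsf{IND})$ in place of $(\textsf{IND}^{\textsf{int}})$, and the \Comp-specific subderivation for RepL when a completeness principle is present); for the quantifier truth-rules $T\textsf{all}$ and $T\textsf{nall}$ it first applies the inversion operation of Proposition~\ref{pr_HPInversion} to the relevant premise and then recurses, exactly as in the proof of the Main Lemma. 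I would then verify that $\PA$ proves $\forall d\,\forall n\,\forall x\,[\Prf_{\KFintst}(d,n,0,x)\to\Prf_{\PKFst}(\mathsf{tr}(d),\widehat{x})]$, where $\widehat{x}$ is the primitive recursively computed code of the transformed sequent; since the matrix is primitive recursive, this is a $\Pi_1$ statement, proved in $\PA$ by induction on $n$ that simply formalizes the induction of Lemma~\ref{le_TKFintSubPKF}.

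With this in hand the corollary follows quickly. Assume $\PKFst\vdash\Bew_{\KFintst}(\bo n,\bo 0,\cor{\GD})$, i.e.\ $\PKFst\vdash\exists d\,\Prf_{\KFintst}(d,\bo n,\bo 0,\cor{\GD})$. Applying the $\PA$-theorem just described, which is available in \PKFst since $\PKFst\supseteq\PA$ and, by Lemma~\ref{le_PKF}, \PKFst behaves classically on \lpa, yields $\PKFst\vdash\Bew_{\PKFst}(\cor{\Gamma^+,\Delta^-\Then\Gamma^-,\Delta^+})$. Now I invoke that \PKFst is arithmetically sound: its intended non-classical fixed-point models have $\Nbl$ as their \lpa-reduct, so every \lpa-theorem of \PKFst---in particular this $\Sigma_1$ provability statement---is true. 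Hence a genuine \PKFst-derivation of $\Gamma^+,\Delta^-\Then\Gamma^-,\Delta^+$ exists, which is the desired conclusion.

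I expect the main obstacle to be the bookkeeping in the quantifier truth-rule cases. Because these rely on inversion, formalizing them requires checking that inversion (Proposition~\ref{pr_HPInversion}) is itself a primitive recursive operation that is provably both height- and cut-rank-preserving, so that the recursion defining $\mathsf{tr}$ feeds back derivations of strictly smaller height and of cut rank $0$ (quasi-normality, Proposition~\ref{pr_CutEliminationKFint}), whence the outer induction on $n$ closes. A secondary subtlety, essential for the later full-induction applications rather than for the present statement, is the passage from provable-provability to provability, which is exactly where $\Sigma_1$-soundness of \PKFst is used; for the present statement it may even be bypassed, since $\bo n$ is a standard numeral, so $\Sigma_1$-soundness already converts the hypothesis directly into a genuine quasi-normal \KFintst-derivation of height $\le n$ to which Lemma~\ref{le_TKFintSubPKF} applies.
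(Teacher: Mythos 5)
Your proposal is correct and matches the paper's (implicit) argument: the paper states this corollary without proof, merely remarking that Lemma~\ref{le_TKFintSubPKF} ``can be formalized in $\PKF_\star$'', and your primitive recursive translation $\mathsf{tr}$ of quasi-normal $\KFintst$-derivations into $\PKFst$-derivations, together with its $\PA$-verifiability by induction on the height, is exactly the content of that remark. The one ingredient you make explicit---arithmetical ($\Sigma_1$-) soundness of $\PKFst$, needed to pass from provable provability back to actual provability, or equivalently your shortcut that converts the hypothesis into a genuine quasi-normal derivation and then applies Lemma~\ref{le_TKFintSubPKF} directly---is left tacit in the paper but is indeed required and is available since every $\PKFst$ has fixed-point models whose arithmetical reduct is standard; your closing observation that the uniform, fully formalized version (rather than the shortcut) is what the later transfinite argument needs is also accurate.
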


We have shown how to transform provable sequents in \KF-systems with restricted or internal induction into provable sequents of appropriate \PKF-like theories. It remains to show the claim for \KF-like theories with full induction. For these theories, cut elimination fails. However, it is well known that axioms and rules \KF-like theories can be derived in a sequent calulus with infinitary rules replacing the schema of induction. Hence we can take a detour via the infinitary system $\KF^\infty$, which contains \KF and enjoys partial cut elimination. The key observation that makes this detour possible is that for $\KF^\infty$ quasi-normal derivations up to height $\varepsilon_0$ the strategy employed in our Main lemma (Lemma \ref{le_TKFintSubPKF}) can be used 
to provide suitable $\PKF^+$-derivations. This proves sufficient for lifting Lemma \ref{le_TKFintSubPKF} to the $\KF$-theories with full induction. The technique for embedding of $\KF$ in $\KF^\infty$ is well known, however, we give a succinct presentation of $\KF^\infty$ for the sake of completeness.\footnote{%
	See, for instance, \cite{takProof} or \cite{sepProof}.
}

\begin{definition}[$\PA_\omega$]
	The language of the infinitary sequent-style version of \PA, $\PA_\omega$, is obtained by omitting free variables. The axioms of $\PA_\omega$ are
	\begin{IEEEeqnarray*}{RL}
		\emptyset\Then\varphi & \hspace{2mm} \text{ if } \varphi \text{ is a true atomic sentence;} \\
		\varphi\Then\emptyset & \hspace{2mm} \text{ if } \varphi \text{ is a false atomic sentence;} \\
		\varphi(t)\Then\varphi(s) & \hspace{2mm} \text{ if } \varphi \text{ is an atomic sentence and } t_i \text{ and } s_i \text{ evaluate to the same numeral.}
	\end{IEEEeqnarray*}
$\PA_\omega$ has all the inference rules of \PA except for $(\forall$R$)$ and \textnormal{(\sf IND)}. Instead, it has the $\omega$-rule
	\begin{prooftree}
		\AxC{$\GD, \varphi(t_1)$}
		\AxC{$\GD, \varphi(t_2)$}
		\AxC{$\dots$}
		\AxC{$\GD, \varphi(t_n)$}
		\AxC{$\dots$}
		\RightLabel{$\omega$}
		\QuinaryInfC{$\GD, \forall v\varphi$}
	\end{prooftree}
\end{definition}

\begin{definition}[$\KFst^\infty$]
	$\KFst^\infty$ extends $\PA_\omega$ with truth-theoretic axioms and truth-theoretic rules of \KFst (formulated in the new language).
\end{definition}

$\KF^\infty$-derivations, due to ($\omega$), are possibly infinite. Notions about derivations introduced above, including height and cut rank, carry over without modifications. In particular, $\KF^\infty$-derivations are well-founded trees, where at each node there is either the root, or instances of axioms, or there is a 1-fold branching (corresponding to unary rules), or a two fold branching (corresponding to binary rules), or an $\omega$-fold branching (corresponding to the $\omega$-rule). Note that if $\KF^\infty_\star\vdash^\alpha\GD, \forall v\varphi$, then $\KF^\infty_\star\vdash^\alpha\Gamma,\varphi(t)$ for any closed term $t$.

Every $\KF^\infty$-derivation can be transformed into a $\KF^\infty$-derivation with height $<\omega^2$ and finite cut rank.\footnote{%
	For sake of readability we omit $\star$-index for the $\KF^\infty$ theories.
} 
It is also well-known that the cost of lowering the cut-rank from $k+1$ to $k$ is exponential with base $\omega$, that is if $\PA_\omega\vdash^\alpha_{k+1}\GD$, then $\PA_\omega\vdash^{\omega^\alpha}_k\GD$. 
It follows that every $\KF^\infty$-derivation of height $\alpha<\varepsilon_0$ and cut rank $m$ can be transformed into a quasi-normal derivation of height $\varphi^m_0\alpha$, where $\varphi^m_0\alpha$ stands for $m$ iterations of the Veblen function $\varphi_0$ on $\alpha$.\footnote{%
	Recall the well-known identities $\varphi_0\alpha=\omega^\alpha$ and $\varphi_10=\varepsilon_0$.
}
We can thus restrict our attention to $\KF^\infty$-derivations of finite cut rank and length $<\varepsilon_0$. These derivations can be primitive recursively encoded by natural numbers, and the codes will contain information about the derivation.\footnote{See \citet[\S4.2.2]{schProof} for details.} In particular, if $u$ codes a derivation \Dmc we can primitive recursively read off from $u$ a bound for the length of \Dmc and a bound for its cut rank; additionally, we can read off the name of: the last inference, its principal/side formulae and its conclusion. This enables us to find a predicate, say $\Bew_\infty(a, \bo r, \cor{\GD})$, expressing the relation $\KF^\infty\vdash^\alpha_\rho\GD$, i.e., $\GD$ is derivable in $\KF^\infty$ with a derivation of length $\leq\alpha$ and cut rank $\leq \rho$. Due to the amount of transfinite induction available in $\PKF^+$ the embedding of $\KF$ in $\KF^\infty$ and partial cut-elimination for $\KF^\infty$ can be formalized in $\PKF^+$.

\begin{lemma}\label{le_Formal}Let 
$\Gamma, \Delta\Sub\lpat$. Then for all $\star$
	\begin{itemize}
		\item[(i)] For all $n,r\in\omega$, $\PKF^{+}_{\star}\vdash\Bew_{\KF_{\star}}(\bo n, \bo r, \cor{\GD}) \then \Bew_\infty(\omega^2, \bo r, \cor{\GD})$.
		\item[(ii)] For $\alpha<\varepsilon_0, \PKF^+_{\star}\vdash\Bew_\infty(a, \bo r, \cor{\GD}) \then \Bew_\infty(\varphi^r_0 a, \bo 0, \cor{\GD})$.
	\end{itemize}	
\end{lemma}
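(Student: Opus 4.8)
The plan is to carry out inside $\PKF^{+}_{\star}$ the two classical proof-theoretic constructions that justify the detour through $\KF^\infty_\star$: part~(i) formalizes the embedding of the finitary calculus into the infinitary one, and part~(ii) the predicative cut-elimination for $\KF^\infty_\star$. The crucial preliminary observation is that $\Bew_{\KF_\star}$ and $\Bew_\infty$ are purely arithmetical predicates---they speak only about primitive-recursive codes of derivations and never mention $T$---so that, by Lemma~\ref{le_PKF}, $\PKF^{+}_{\star}$ reasons about them exactly as classical $\PA$ extended by $(\TI^{<\varepsilon_0})$ does. Consequently the only genuinely new content is the ordinal bookkeeping, and the whole point is that every height occurring stays below $\varepsilon_0$, so that the transfinite induction available in $\PKF^{+}_{\star}$ is exactly strong enough.

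For (i) I would proceed by an external induction on the fixed standard numeral $n$, producing a separate $\PKF^{+}_{\star}$-proof of each instance. Arguing inside $\PKF^{+}_{\star}$ about a code witnessing $\Bew_{\KF_\star}(\bo n,\bo r,\cor{\GD})$, I read off its last inference and translate it: the arithmetical and truth-theoretic axioms of $\KF_\star$ are already axioms of $\KF^\infty_\star$; the propositional rules, the truth-rules, and $(\forall\text{L})$ are copied, each raising the height by one; free variables are removed by passing to closed instances, so $(\forall\text{R})$ is replaced by the $\omega$-rule, which $\PKF^{+}_{\star}$ can certify because its arithmetic proves that the requisite premise-derivation exists uniformly in the closed instance. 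The rule that needs work is $(\textsf{IND})$: from the premise instances $\Gamma,\varphi(\bo i)\Then\varphi(\bo{i{+}1}),\Delta$ one chains cuts on $\varphi(\bo 1),\dots,\varphi(\bo{m-1})$ and recovers $\varphi(t)$ from $\varphi(\bo m)$ by the evaluation axioms of $\PA_\omega$. Since a single finitary step is thereby worth at most $\omega$ in the infinitary height and a height-$n$ derivation has branches of length at most $n$, the resulting height is bounded by $\omega\cdot n<\omega^2$, while the cut rank stays finite---the cuts introduced in the simulation of $(\textsf{IND})$ fall on the induction formulas, so writing $\bo r$ as in the statement presupposes that $r$ majorizes their complexities, which is harmless since part~(ii) eliminates cuts of any finite rank.

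For (ii) the heart is the \emph{reduction lemma}, which transforms a cut of rank $k+1$ standing between two derivations of cut rank $\le k$ into a derivation of cut rank $\le k$ at an additive cost in height; its proof is the standard case analysis on the two inferences immediately above the cut, commuting the cut upward when the cut formula is not principal and contracting it against the immediate subformula derivations when it is. Threading this through a whole derivation in the Gentzen--Schütte fashion yields the single-level estimate recalled just before the lemma, namely that $\PKF^{+}_{\star}$ proves $\Bew_\infty(a,\bo{k{+}1},\cor{\GD})\then\Bew_\infty(\varphi_0 a,\bo k,\cor{\GD})$ with $\varphi_0 a=\omega^a$, and this is established by transfinite induction on $a$. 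It is precisely here that $(\TI^{<\varepsilon_0})$ is indispensable, the induction being licit because $a<\varepsilon_0$ entails $\omega^a<\varepsilon_0$. Iterating the estimate $r$ times, by an external induction on the standard numeral $r$, delivers $\Bew_\infty(a,\bo r,\cor{\GD})\then\Bew_\infty(\varphi^r_0 a,\bo 0,\cor{\GD})$, which is (ii); as $\varphi^r_0 a<\varepsilon_0$ whenever $a<\varepsilon_0$ and $r$ is finite, no ordinal beyond the reach of $(\TI^{<\varepsilon_0})$ is ever required.

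The main obstacle I anticipate is the formalized reduction lemma together with its ordinal accounting: one has to verify that the principal-cut reductions go through not only for the propositional and quantifier rules but also for the \emph{truth}-rules of $\KF^\infty_\star$ with the claimed additive cost, and that the ensuing transfinite recursion on $a<\varepsilon_0$ can be packaged as one application of $(\TI^{<\varepsilon_0})$ to an arithmetical formula---legitimate because, by Lemma~\ref{le_PKF}, such formulae behave classically and admit the usual Schütte argument. A secondary delicate point, already in (i), is that certifying the $\omega$-rule forces $\PKF^{+}_{\star}$ to prove a statement universally quantified over closed terms, so the simulations of $(\forall\text{R})$ and of $(\textsf{IND})$ must be arranged uniformly rather than instance-by-instance.
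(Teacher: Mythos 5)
Your proposal is correct and matches the paper's intent exactly: the paper states this lemma without proof, relying on the standard Gentzen--Sch\"utte embedding of the finitary calculus into $\KF^\infty_\star$ (with $(\textsf{IND})$ simulated by iterated cuts, giving height $<\omega^2$) and on predicative cut elimination via the reduction lemma iterated $r$ times, formalized using $(\TI^{<\varepsilon_0})$ --- precisely the argument you spell out. Your caveats about the cut rank after simulating $(\textsf{IND})$ and about the uniformity needed to certify the $\omega$-rule are apt and do not affect the result.
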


\begin{lemma}\label{le_ILKFSubILPKF+}
	For $\Gamma, \Delta$ sets of literals, for all $\star$, and for all $\alpha<\varepsilon_0$,
\begin{align*}
\PKF^+_\star\vdash \Bew_\infty(a, \bo 0, \cor{\GD})&\text{  implies }\PKF^+_\star\vdash \Gamma^+, \Delta^-\Then\Gamma^+, \Delta^-.
	\end{align*}
\end{lemma}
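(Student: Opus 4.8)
The plan is to prove the statement by transfinite induction on the ordinal $\alpha<\varepsilon_0$ coded by $a$, replaying the argument of the Main Lemma (Lemma~\ref{le_TKFintSubPKF}) but now over $\KF^\infty_\star$-derivations of cut rank $0$ and transfinite height, producing the transformed succedent $\Gamma^+,\Delta^-\Then\Gamma^-,\Delta^+$ familiar from that lemma. Since $\PKF^+_\star$ proves transfinite induction up to $\varepsilon_0$ via $(\TI^{<\varepsilon_0})$, an induction of exactly this length is available. For each fixed code $a$ of $\alpha<\varepsilon_0$, from a $\PKF^+_\star$-proof of $\Bew_\infty(a,\bo 0,\cor{\GD})$ one reads off arithmetically, from the code $u$ of the witnessing derivation (as described in the text preceding the lemma), the name of the last inference together with the codes and height-bounds of its immediate premises; the induction hypothesis then supplies $\PKF^+_\star$-derivations of the transformed premises, and one reassembles a $\PKF^+_\star$-derivation of $\Gamma^+,\Delta^-\Then\Gamma^-,\Delta^+$.

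First I would treat the base of the induction, where $\GD$ is a $\KF^\infty_\star$-initial sequent. The $\PA_\omega$-axioms (true/false atomic sentences and equalities of closed terms) and the truth axioms containing no negated atom are handled exactly as in the $n=0$ case of the Main Lemma; the only truth axiom carrying a negated atom, $(T\neg=)$, is dealt with by Lemma~\ref{le_PKF}(iii); and the theory-specific initial sequents $\Cons$, $\Comp$, and $\textsf{GoG}$ are reproduced in $\PKFcs$, $\PKFcp$, and $\PKFs$ respectively by the very derivations establishing~(\ref{ty'})--(\ref{tyyy'}) above.

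For the inductive step I would argue by cases on the last inference. Because the end-sequent $\GD$ consists only of literals and the derivation is quasi-normal, any rule whose principal formula is compound---in particular $(\land$R$)$, $(\forall$R$)$, and the $\omega$-rule---cannot be the last inference, so these cases are vacuous; note that the $\omega$-rule, which in $\KF^\infty_\star$ takes over the role of $(\textsf{IND})$, introduces the non-literal $\forall v\varphi$ and hence never occurs, the contribution of induction being absorbed entirely into the transfinite height on which we are inducting. The remaining cases---Cut (applied to literals only, by quasi-normality), (Ref), $(\neg$L$)$, $(\neg$R$)$, (RepL), and the truth-rules---are handled precisely as in the Main Lemma: apply the induction hypothesis to the premises and then the matching rule, or, for (RepL) in the complete case, the replacement schema followed by Cut in $\PKF^+_\star$. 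The truth-rules for the quantifier, $T\textsf{all}$ and $T\textsf{nall}$, again require the height-preserving inversion of $\KF^\infty_\star$ (namely $\KF^\infty_\star\vdash^\alpha\GD,\forall v\varphi$ yielding $\KF^\infty_\star\vdash^\alpha\Gamma,\varphi(t)$ for any closed $t$), together with the observation that $\neg\forall v\varphi$ can be principal only if introduced by $(\neg$R$)$ or $(\neg$L$)$; these are applied exactly as in the $T\textsf{all}$R and $T\textsf{nall}$R cases of the Main Lemma.

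The main obstacle is not the case analysis, which transcribes the Main Lemma almost verbatim, but ensuring that the entire construction is genuinely carried out within $\PKF^+_\star$: one must check that reading off the last inference and the subderivation data from the code $u$, the height-preserving inversion for $\KF^\infty_\star$, and the reassembly step are all formalizable, and---crucially---that the recursion threading these steps together is licensed by the transfinite induction $(\TI^{<\varepsilon_0})$ of $\PKF^+_\star$ and not by induction of length $\ge\varepsilon_0$, which the theory does not prove. This is exactly the point at which $\PKF^+_\star$, rather than $\PKF_\star$, is required, and it is where I expect the real work of the argument to lie.
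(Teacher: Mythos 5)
Your proposal is correct and follows essentially the same route as the paper: transfinite induction on $\alpha<\varepsilon_0$ licensed by $(\TI^{<\varepsilon_0})$, formalizing the case analysis of the Main Lemma over quasi-normal $\KF^\infty_\star$-derivations, dismissing the $\omega$-rule (and hence the limit case) because the end-sequent contains only literals, and isolating the quantifier truth-rules—handled via height-preserving inversion and an eigenparameter argument inside $\PKF^+_\star$—as the only cases needing extra care. You also correctly identify the intended conclusion $\Gamma^+,\Delta^-\Then\Gamma^-,\Delta^+$ and the point where $\PKF^+_\star$ (rather than $\PKF_\star$) is genuinely needed.
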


\begin{proof}
The proof is by transfinite induction on $\alpha<\varepsilon_0$ and to a large extent a formalization of the proof of Lemma~\ref{le_TKFintSubPKF}. For $\alpha=0$ it suffices indeed to formalize the proof of Lemma~\ref{le_TKFintSubPKF}. The case where $\alpha$ is a limit ordinal involves an application of the $\omega$-rule. These cases need not be taken into accout, as they introduce complex formulae. For $\alpha=\beta+1$, the crucial cases are the truth-rules involving quantifiers, as the remaining cases are again immediate by formalizing the proof of Lemma~\ref{le_TKFintSubPKF}. We discuss ($T${\sf all}R)---the other rules can be treated along the same lines. Suppose the $\KF^\infty$-derivation ends with
	\begin{prooftree}
	\def\fCenter{\mbox{ $\GDp$ }}
		\Ax$\vdash^\beta\fCenter, \forall z T(s[\dot{z}/v])$
		\RL{$T${\sf all}R}
		\Un$\vdash^\alpha\fCenter, T(\ud\forall v.s)$
	\end{prooftree}
By inversion on the upper sequent we get, for all closed terms $t$
	\[
		\KF^\infty\vdash^\beta \GDp, T(s[\num(t)/v]).
	\]
This can be formalized within $\PKF^+$, that is, 
	\begin{IEEEeqnarray*}{L}
			\PKF^+\vdash\forall y (\neg\ClTerm(y) \lor \Bew_\infty(b, \bo 0, \cor{\GDp, T(s[\dot{y}/v])}))
		\end{IEEEeqnarray*}
Now, let $u$ be an eigenparameter and recall that $\PA\vdash\ClTerm(\dot{x})$. Then\footnote{%
	Notice that $\PKF^+\vdash\forall v\varphi, \GD, \varphi(t)$ for arbitrary $t$.
}
	\begin{prooftree}
		\AxC{}
		\RL{\PA}
		\UnC{$\Then \ClTerm(\dot{u})$}
		\AxC{$\Then \neg\ClTerm(\dot{u}), \Bew_\infty(b, \bo 0, \cor{\GDp, T(s[\dot{u}/v])})$}
		\RL{Lemma~\ref{le_PKF}(iii)}
		\UnC{$\ClTerm(\dot{u}) \Then \Bew_\infty(b, \bo 0, \cor{\GDp, T(s[\dot{u}/v])})$}
		\RL{Cut}
		\BnC{$\Then \Bew_\infty(b, \bo 0, \cor{\GDp, T(s[\dot{u}/v])})$}
	\end{prooftree}
We apply the induction hypothesis and reason in $\PKF^+_\star$ as follows
	\begin{prooftree}
	\def\fCenter{\mbox{ $\Gamma^+, \Delta'^-\Then\Gamma^-, \Delta'^+$ }}
		\AxC{}
		\RL{i.h.}
		\Deduce$\fCenter, T(s[\dot{u}/v])$
		\RL{$\forall$R}
		\Un$\fCenter, \forall yT(s[\dot{y}/v])$
		\RL{$T${\sf all}R}
		\Un$\fCenter, T(\ud\forall v.s)$
	\end{prooftree}
\end{proof}

Since, as mentioned, $\KF_{\star}$ can be embedded in $\KF^\infty$ without exceeding quasi-normal derivation of length $<\varepsilon_0$, we obtain:

\begin{corollary}\label{lTKF+ILPKF+}
	For $\Gamma, \Delta$ sets of literals, for all $\star$
\begin{align*}
\KF_\star\vdash \GD&\text{  implies }\PKF^+_\star\vdash \Gamma^+, \Delta^-\Then\Gamma^-, \Delta^+.
	\end{align*}
\end{corollary}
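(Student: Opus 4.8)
The plan is to chain the two preceding lemmas together, reading the abstract provability statements off from a genuine $\KF_\star$-derivation. Suppose $\KF_\star\vdash\GD$ with $\Gamma,\Delta$ sets of literals. Then an actual $\KF_\star$-derivation exists, so for suitable numerals $\bo n,\bo r$ the arithmetical sentence $\Bew_{\KF_\star}(\bo n,\bo r,\cor{\GD})$ is true in the standard model. Since the finitary proof predicate is primitive recursive, this sentence is $\Sigma_1$, and since $\PKF^+_\star$ behaves classically on $\lpa$ and proves every true $\Sigma_1$-sentence (Lemma~\ref{le_PKF}), the first step is to secure $\PKF^+_\star\vdash\Bew_{\KF_\star}(\bo n,\bo r,\cor{\GD})$. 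All three predicates $\Bew_{\KF_\star}$, $\Bew_\infty$ occurring below are $T$-free, so detachment for the object-level conditionals $\then$ is available throughout via the classical behaviour of the arithmetical fragment.

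First I would feed this into Lemma~\ref{le_Formal}(i), which for our fixed $\bo n,\bo r$ is a provable conditional; by detachment this yields $\PKF^+_\star\vdash\Bew_\infty(\omega^2,\bo r,\cor{\GD})$, internalizing the embedding of $\KF_\star$ into $\KF^\infty$ together with the reduction to height ${<}\,\omega^2$. Next I would apply Lemma~\ref{le_Formal}(ii) with $a:=\omega^2$ (legitimate since $\omega^2<\varepsilon_0$) to eliminate the finite cut rank $r$, obtaining $\PKF^+_\star\vdash\Bew_\infty(\varphi^r_0\omega^2,\bo 0,\cor{\GD})$, i.e.\ a quasi-normal $\KF^\infty$-derivation of $\GD$ whose height is bounded by $\varphi^r_0\omega^2$.

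The point that must be verified before invoking the final lemma is that $\varphi^r_0\omega^2<\varepsilon_0$. Because $\varphi_0\alpha=\omega^\alpha$ and $r$ is a fixed finite number, $\varphi^r_0\omega^2$ is a finite tower of $\omega$-exponentials based at $\omega^2$, and every such finite tower lies strictly below $\varepsilon_0$. Hence $a:=\varphi^r_0\omega^2$ is an admissible instance of the hypothesis $\alpha<\varepsilon_0$ of Lemma~\ref{le_ILKFSubILPKF+}. Applying that lemma to the cut-rank-$0$ provability statement just derived delivers exactly $\PKF^+_\star\vdash\Gamma^+,\Delta^-\Then\Gamma^-,\Delta^+$, which is the claim.

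For this corollary the only genuine mathematical content sits in the lemmas already proved: the transfinite-induction argument inside Lemma~\ref{le_ILKFSubILPKF+}, which lifts the strategy of the Main Lemma (Lemma~\ref{le_TKFintSubPKF}) to quasi-normal $\KF^\infty$-derivations and consumes precisely the induction up to $\varepsilon_0$ that separates $\PKF^+_\star$ from $\PKF_\star$. The present argument is therefore pure bookkeeping of ordinal bounds plus the internalization of $\KF_\star$-provability, and the step I would guard most carefully is that the embedding yields a \emph{finite} cut rank $r$ (so that clause (ii) applies with a fixed finite $r$) and that the height $\varphi^r_0\omega^2$ it produces genuinely remains below $\varepsilon_0$, since everything downstream depends on staying within the induction available in $\PKF^+_\star$.
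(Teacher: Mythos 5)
Your proposal is correct and follows exactly the route the paper intends: the paper derives this corollary immediately from Lemma~\ref{le_Formal} (internalized embedding of $\KF_\star$ into $\KF^\infty$ below height $\omega^2$ plus partial cut elimination) followed by Lemma~\ref{le_ILKFSubILPKF+}, which is precisely the chain you spell out, including the check that $\varphi^r_0\omega^2<\varepsilon_0$ and the use of $\Sigma_1$-completeness together with the classical behaviour of the arithmetical fragment to detach the formalized conditionals. The only difference is that you make explicit the bookkeeping the paper compresses into the sentence ``$\KF_\star$ can be embedded in $\KF^\infty$ without exceeding quasi-normal derivations of length $<\varepsilon_0$.''
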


This immediately yields that the set of significant inferences of \KF-like theories is contained in the set of provable sequents of appropriate \PKF-like theories.

\begin{lemma}\label{KFSItoPKF}$\KF^\circ\Ssf\Isf\subseteq\PKF^\circ$\end{lemma}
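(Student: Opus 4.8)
The plan is to obtain this inclusion as an assembling corollary of the three embedding results of Section~\ref{subsec:KFinPKF}, combined with the transparency of the $\PKF$-truth predicate. The key preliminary observation is that a significant inference, viewed as a sequent, is built entirely from \emph{positive atoms}: if $\ang{\Gamma,\Delta}\in\KF^\circ\Ssf\Isf$ then by Definition~\ref{df_SignificantInferences} we have $\KF^\circ\vdash\TGD$, and every formula occurring in $\TGD$ is of the form $T\cor\gamma$ or $T\cor\delta$, hence an atomic sentence. Consequently the transformation $(\cdot)^+,(\cdot)^-$ figuring in Lemma~\ref{le_TKFintSubPKF} and Corollaries~\ref{TKFrestrPKFrestr} and \ref{lTKF+ILPKF+} is the identity on such a sequent: applying these results to $\TGD$ (that is, taking their ``$\Gamma\Then\Delta$'' to be $T\cor\Gamma\Then T\cor\Delta$), the antecedent $T\cor\Gamma$ contributes $(T\cor\Gamma)^+=T\cor\Gamma$ and $(T\cor\Gamma)^-=\emptyset$, and dually for $T\cor\Delta$, so the transformed sequent is again $\TGD$.

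First I would dispatch the three families of pairs listed in Definition~\ref{pairs}, each by its matching embedding result. For the pair $(\PKF^+_\star,\KFst)$ with full induction, Corollary~\ref{lTKF+ILPKF+} applied to the literal sequent $\TGD$ yields $\PKF^+_\star\vdash\TGD$. For the pair $(\PKFst,\KFintst)$, the Main Lemma~\ref{le_TKFintSubPKF} applied to $\TGD$ gives $\PKFst\vdash\TGD$. For the pair $(\PKFst\har,\KFst\har)$, I would first invoke Fact~\ref{ob_Embedding}, so that $\KFst\har\vdash\TGD$ yields $\KFintst\har\vdash\TGD$, and then apply Corollary~\ref{TKFrestrPKFrestr} to obtain $\PKFst\har\vdash\TGD$. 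In each of the three cases, the triviality of the transformation noted above means the conclusion is exactly $\PKF^\circ\vdash\TGD$.

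It then remains to strip the truth predicates, i.e.\ to pass from $\PKF^\circ\vdash\TGD$ to $\PKF^\circ\vdash\GD$, which is where transparency enters. Iterating Lemma~\ref{le_PKF}(ii) in the succedent replaces each $T\cor\delta$ by $\delta$, giving $\PKF^\circ\vdash T\cor\Gamma\Then\Delta$. For the antecedent I would use the same lemma in the form $\PKF^\circ\vdash\gamma\Then T\cor\gamma$ (obtained from the initial sequent $\gamma\Then\gamma$ by one application of Lemma~\ref{le_PKF}(ii)) together with one Cut per $\gamma\in\Gamma$ to replace each $T\cor\gamma$ by $\gamma$. This yields $\PKF^\circ\vdash\GD$, that is $\ang{\Gamma,\Delta}\in\PKF^\circ$, as desired.

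I do not expect a genuine obstacle: this lemma is essentially bookkeeping on top of the heavy machinery already proved. The only two points requiring care are (i) verifying that the $(\cdot)^+,(\cdot)^-$ transformation really collapses to the identity on significant-inference sequents, which hinges precisely on each $T\cor\gamma$ being a positive atom, and (ii) ensuring that transparency is available uniformly across all $\PKF$-variants, including the $\har$- and $+$-versions; this is guaranteed since Lemma~\ref{le_PKF} is stated for every theory of Definition~\ref{df_PKFCluster}. Combined with Corollary~\ref{co_ILPKFSubILKF}, this establishes the equality $\PKF^\circ=\KF^\circ\Ssf\Isf$.
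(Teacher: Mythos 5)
Your proposal is correct and follows essentially the same route as the paper: apply Lemma~\ref{le_TKFintSubPKF} and Corollaries~\ref{TKFrestrPKFrestr} and \ref{lTKF+ILPKF+} to the all-atomic sequent $\TGD$ (on which the $(\cdot)^+,(\cdot)^-$ transformation is the identity), then strip the truth predicates via transparency (Lemma~\ref{le_PKF}(ii)). Your write-up is simply a more detailed version of the paper's two-line argument, and the extra care you take (invoking Fact~\ref{ob_Embedding} for the $\har$-pairs, and handling antecedent occurrences by Cut) fills in steps the paper leaves implicit.
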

\begin{proof}
	 If $\KF^\circ\vdash\TGD$, then $\PKF^\circ\vdash \TGD$ by Lemma \ref{le_TKFintSubPKF} and Corollaries \ref{TKFrestrPKFrestr}, \ref{lTKF+ILPKF+}. But the truth predicate of $\PKF^\circ$ is transparent and thus $\PKF_\circ\vdash \GD$.
\end{proof}
Lemma \ref{KFSItoPKF} in combination with Corollary \ref{co_ILPKFSubILKF} show that $\PKF^\circ$ yields precisely the significant sentence of $\KF^\circ$. In other words we have answered Question b) of {\sc Reinhardt's Challenge}. 
\begin{proposition}[{\sc Reinhardt's Challenge}]\label{prop_MainIL}
	$\PKF^\circ = \KF^\circ\Ssf\Isf$.
\end{proposition}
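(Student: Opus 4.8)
The plan is to read the asserted identity as an equality of two sets of pairs $\ang{\Gamma,\Delta}$ of finite sets of \lpat-sentences: on the left, those with $\PKF^\circ\vdash\GD$, and on the right, by Definition~\ref{df_SignificantInferences}, those with $\KF^\circ\vdash\TGD$. I would then establish the two inclusions separately and conclude equality.

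For $\PKF^\circ\Sub\KF^\circ\Ssf\Isf$ I would appeal directly to Corollary~\ref{co_ILPKFSubILKF}: it already records that $\PKF^\circ\vdash\GD$ implies $\KF^\circ\vdash\TGD$, which is exactly $\ang{\Gamma,\Delta}\in\KF^\circ\Ssf\Isf$. This is the soundness half, and it rests on Proposition~\ref{le_H&HNExt} together with the remark that the only fresh work for the consistency and completeness variants is to have the \KF-side internalize the soundness of $(\neg\textsf{L})$, respectively $(\neg\textsf{R})$.

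For the converse $\KF^\circ\Ssf\Isf\Sub\PKF^\circ$ I would invoke Lemma~\ref{KFSItoPKF}, unwinding it as follows. Suppose $\ang{\Gamma,\Delta}\in\KF^\circ\Ssf\Isf$, i.e.~$\KF^\circ\vdash\TGD$. Since every member of $T\cor\Gamma$ and $T\cor\Delta$ is a positive atom, the literal-sorting map $\Theta\mapsto\ang{\Theta^+,\Theta^-}$ fixes this sequent (here $(T\cor\Gamma)^-=(T\cor\Delta)^-=\emptyset$); hence the Main Lemma~\ref{le_TKFintSubPKF} and Corollaries~\ref{TKFrestrPKFrestr} and \ref{lTKF+ILPKF+}---one for each of the three blocks of pairs in Definition~\ref{pairs} (internal, restricted, and full induction)---give $\PKF^\circ\vdash\TGD$. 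Transparency, Lemma~\ref{le_PKF}(ii), then collapses $T\cor{\psi(\dot{\vec x})}$ to $\psi(\vec x)$ on both sides, so $\PKF^\circ\vdash\GD$, i.e.~$\ang{\Gamma,\Delta}\in\PKF^\circ$. The two inclusions together yield $\PKF^\circ=\KF^\circ\Ssf\Isf$.

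At the level of this proposition there is essentially no obstacle: it is the bookkeeping step that fuses the two halves proved above. All the difficulty has already been discharged inside Lemma~\ref{KFSItoPKF}, and it is concentrated in the case of full induction, where cut elimination fails: there one must route through the infinitary calculus $\KF^\infty$, push the cut rank down to $0$ while keeping the derivation length below $\varepsilon_0$, and then re-run the literal-shuffling argument of the Main Lemma \emph{inside} $\PKF^+$, using precisely the transfinite induction up to $\varepsilon_0$ that $\PKF^+$ supplies (Lemmas~\ref{le_Formal} and \ref{le_ILKFSubILPKF+}). That is where the proof-theoretic strength is spent, and it is why the pairing of Definition~\ref{pairs} matches the full-induction \KF-theories with the $\PKF^+$-column rather than with plain \PKF.
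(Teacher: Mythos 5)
Your proposal is correct and follows essentially the same route as the paper: the proposition is obtained by combining the inclusion $\PKF^\circ\Sub\KF^\circ\Ssf\Isf$ (Corollary~\ref{co_ILPKFSubILKF}, via Proposition~\ref{le_H&HNExt}) with the converse inclusion of Lemma~\ref{KFSItoPKF}, which rests on the Main Lemma and its corollaries together with transparency of the $\PKF$-truth predicate. Your observation that the literal-sorting transformation fixes sequents of the form \TGD is exactly the point the paper makes when motivating the stronger statement of the Main Lemma.
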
 
Before we turn our attention to the {\sc Generalized Reinhardt Problem} we remark that an answer to Reinhardt's Question b) yields an answer to Question a) as a corollary, that is, our result subsumes the results provided by \cite{hahAxiomatizing}, \cite{hanCosts}, and \cite{nicProvably}.
\begin{corollary}\label{cor_KFSproof}$\{\varphi\in\Sent_{\lpat}\mid\PKF^\circ\vdash \varphi\}=\KF^\circ\Ssf$.\end{corollary}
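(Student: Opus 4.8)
The plan is to read the statement directly off Proposition~\ref{prop_MainIL} by specialising to inferences with empty antecedent and singleton succedent. First I would recall the two pertinent definitions. On the one hand, the significant sentences of a theory were not defined directly but recovered from its significant inferences, via $\KF^\circ\Ssf = \{\varphi\in\Sent \mid \ang{\emptyset,\varphi}\in\KF^\circ\Ssf\Isf\}$. On the other hand, by transparency of the \PKF-truth predicate (Lemma~\ref{le_PKF}(ii)) the symbol $\PKF^\circ$ occurring in Proposition~\ref{prop_MainIL} denotes the set of provable sequents $\{\ang{\Gamma,\Delta}\mid\PKF^\circ\vdash\GD\}$.

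Next I would chase the equivalences for a fixed sentence $\varphi\in\Sent_{\lpat}$. By the sequent conventions, $\PKF^\circ\vdash\varphi$ means exactly $\PKF^\circ\vdash\emptyset\Then\varphi$, that is, $\ang{\emptyset,\varphi}$ lies in the set of provable sequents of $\PKF^\circ$. Proposition~\ref{prop_MainIL} identifies that set with $\KF^\circ\Ssf\Isf$, so this holds iff $\ang{\emptyset,\varphi}\in\KF^\circ\Ssf\Isf$, which by the displayed definition of $\Ssf$ in terms of $\Ssf\Isf$ is precisely $\varphi\in\KF^\circ\Ssf$. Reading the chain in both directions delivers the asserted set equality.

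I do not anticipate a genuine obstacle: all the substance is already carried by Proposition~\ref{prop_MainIL}, and the corollary is a purely definitional unfolding. The single point meriting a line of care is that $\KF^\circ\Ssf$ is routed through the inference set rather than stated directly as $\{\varphi\mid\KF^\circ\vdash T\cor\varphi\}$; here one should note that, by the convention $T\cor\Gamma=\{T\cor\gamma\mid\gamma\in\Gamma\}$, the membership $\ang{\emptyset,\varphi}\in\KF^\circ\Ssf\Isf$ unpacks to $\KF^\circ\vdash\emptyset\Then T\cor\varphi$, so the empty antecedent is preserved on both sides and no mismatch between the \PKF-formulation and the \KF-formulation arises.
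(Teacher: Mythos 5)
Your proposal is correct and matches the paper's intent exactly: the corollary is stated without a separate proof precisely because it is the definitional specialisation of Proposition~\ref{prop_MainIL} to pairs $\ang{\emptyset,\varphi}$, using the transparency of the \PKF-truth predicate and the definition of $\KF^\circ\Ssf$ in terms of $\KF^\circ\Ssf\Isf$. Nothing is missing.
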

Of course, this result also implies that by means of a very simple observation connecting provably true \KF-sequents to provable \PKF-sequents we have reduced questions regarding the proof-theoretic strength of \PKF-like theories to questions regarding the proof-theoretic strength of \KF-like theories.
\begin{corollary}$\KF^\circ$ and $\PKF^\circ$ are proof-theoretically equivalent, i.e.,
\begin{align*}\KF^\circ[\![\PA]\!]\equiv\PKF^\circ[\![\PA]\!].\end{align*}
\end{corollary}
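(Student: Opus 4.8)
The plan is to read $\mathrm{Th}[\![\PA]\!]$ as the arithmetical fragment of $\mathrm{Th}$, i.e.\ the set $\{\varphi\in\Sent_{\lpa}\mid\mathrm{Th}\vdash\varphi\}$; for the classical theory $\KF^\circ$ this is ordinary provability, while for $\PKF^\circ$ it is derivability of the sequent $\Then\varphi$, which is the natural notion since, by Lemma~\ref{le_PKF}(i),(iii), $\PKF^\circ$ behaves classically on $\lpa$. Proof-theoretic equivalence $\equiv$ then means the equality of these two fragments, and this is what I would establish; it a fortiori yields agreement of the $\Pi^0_1$-consequences and hence of the proof-theoretic ordinals. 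So the task reduces to proving, for every $\varphi\in\lpa$, that $\KF^\circ\vdash\varphi$ if and only if $\PKF^\circ\vdash\Then\varphi$.

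The only ingredient not already isolated above is the standard \KF-fact that its truth predicate is correct on the $T$-free part: for each $\varphi(\vec{v})\in\lpa$ one has $\KF^\circ\vdash\varphi(\vec{v})\Then T\cor{\varphi(\dot{\vec{v}})}$ and $\KF^\circ\vdash T\cor{\varphi(\dot{\vec{v}})}\Then\varphi(\vec{v})$. I would prove this by a routine external induction on the build-up of $\varphi$, pushing the compositional truth axioms and rules of Df.~\ref{df_TruthAxioms} (the principles $(T{=})$, $(T\mathsf{and})$, $(T\mathsf{all})$, $(T\mathsf{dn})$ and their negated companions) through the connectives and the quantifier; carrying free variables along lets the universal case go through via $(T\mathsf{all})$ together with the right rule for $\forall$. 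Since each instance is a finite derivation using only finitely many applications of these principles, no transfinite---indeed no unrestricted arithmetical---induction is needed, so the equivalence holds uniformly across every variant in Definition~\ref{pairs}, including the restricted- and internal-induction theories.

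Granting this correctness equivalence, the corollary falls out by chaining biconditionals: for $\varphi\in\lpa$, $\KF^\circ\vdash\varphi$ iff $\KF^\circ\vdash T\cor\varphi$ (by the equivalence together with Cut in both directions), iff $\varphi\in\KF^\circ\Ssf$ (unfolding $\KF^\circ\Ssf=\{\varphi\mid\KF^\circ\vdash\Then T\cor\varphi\}$), iff $\PKF^\circ\vdash\Then\varphi$ (by Corollary~\ref{cor_KFSproof}). Thus the arithmetical fragments of $\KF^\circ$ and $\PKF^\circ$ literally coincide, which is the asserted equivalence. I do not expect a genuine obstacle here, since the substantive work is already contained in Proposition~\ref{prop_MainIL} and Corollary~\ref{cor_KFSproof}; the one point deserving care is that the internal-correctness equivalence $\varphi\sse T\cor{\varphi}$ must survive in the weakest theories of Definition~\ref{pairs}, but as it is established schema-instance by schema-instance with a bounded number of truth-axiom applications, restricted induction suffices and the check is routine.
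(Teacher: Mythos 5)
Your proposal is correct and matches the argument the paper intends (the paper leaves this corollary without an explicit proof, calling it a ``very simple observation''): the arithmetical fragments coincide because, for $\varphi\in\lpa$, $\KF^\circ\vdash\varphi$ iff $\KF^\circ\vdash T\cor\varphi$ by the externally-induced correctness of $T$ on $\lpa$ (which indeed needs no internal induction, so it survives in the restricted variants), and then Corollary~\ref{cor_KFSproof} transfers this to $\PKF^\circ$-provability. The only point worth recording is that the internal correctness lemma is best proved by simultaneous induction on $\varphi\sse T\cor\varphi$ and $\neg\varphi\sse T\cor{\neg\varphi}$, using the regularity axioms in the quantifier case, but this is the routine check you already flag.
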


\subsection{Generalized Reinhard Problem}
We promised to give a positive answer to the {\sc Generalized Reinhardt Problem}, that is, the question whether for any pair $\ang{\Gamma, \Delta}\in\KF^\circ\Ssf\Isf$ there is a $\KF^\circ$-derivation such that each node of the derivation tree is a member of $\KF^\circ\Ssf\Isf$. It is now time to make good on our promise. Making essential use of Proposition \ref{prop_MainIL} we show that if $\ang{\Gamma, \Delta}\in\KF^\circ\Ssf\Isf$, then there is a $\KF^\circ$-derivation such that each node of the derivation tree is a $\PKF^\circ$-provable sequent. This implies the desired conclusion in virtue of Proposition \ref{prop_MainIL}.

\begin{proposition}[{\sc Generalized Reinhardt Problem}]\label{prop_ReinKF}
	If $\ang{\Gamma, \Delta}\in\KF^\circ\Ssf\Isf$, then there is a $\KF^\circ$-derivation $\Dmc$ of \GD such that for each node $d$ of \Dmc, $d\in\KF^\circ\Ssf\Isf$.
\end{proposition}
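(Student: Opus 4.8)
The plan is to exploit the independent axiomatization from Proposition~\ref{prop_MainIL}, $\PKF^\circ=\KF^\circ\Ssf\Isf$, and to turn the proof strategy of Proposition~\ref{le_H&HNExt} into a statement about \emph{entire derivation trees} rather than just end-sequents. Assume $\ang{\Gamma,\Delta}\in\KF^\circ\Ssf\Isf$. By Proposition~\ref{prop_MainIL} this means $\PKF^\circ\vdash\GD$, so I would first fix a $\PKF^\circ$-derivation $\Dmc$ of $\GD$. The target is a $\KF^\circ$-derivation $\Dmc'$ whose root is the significant sequent $\TGD$ witnessing $\ang{\Gamma,\Delta}$ and each of whose nodes again lies in $\KF^\circ\Ssf\Isf$. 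I would obtain $\Dmc'$ by replaying $\Dmc$ ``under the truth predicate'': replace every formula $\theta$ occurring in $\Dmc$ by its truth-predication $T\cor\theta$, and simulate each $\PKF^\circ$-inference in $\KF^\circ$ by the matching truth-rule of Df.~\ref{df_TruthAxioms}. This is precisely the internalization already used to prove Proposition~\ref{le_H&HNExt}; the only novelty is to record the whole tree it produces.

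The heart of the argument is the observation that \emph{every} node of $\Dmc'$ is automatically a significant inference. Each node $S=(\Theta\Then\Xi)$ is $\KF^\circ$-provable, being the conclusion of a sub-derivation of $\Dmc'$. Furthermore, by construction each formula $\theta$ appearing anywhere in $\Dmc'$ lies in the fragment on which $\KF^\circ$ proves its own T-biconditional, i.e.\ $\KF^\circ\vdash\theta\sse T\cor\theta$: such $\theta$ is either arithmetical, a single truth-predication $T\cor\rho$ with $\KF^\circ\vdash T\cor{T\cor\rho}\sse T\cor\rho$ by the rules $T\mathsf{rp}$, or one of the quantified truth-formulas produced by the quantifier truth-rules, for which the biconditional follows from $T\mathsf{all}/T\mathsf{nall}$ and the canonicity of the G\"odel numbering. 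Cutting $\Theta\Then\Xi$ against these biconditionals yields $\KF^\circ\vdash T\cor\Theta\Then T\cor\Xi$, i.e.\ $S\in\KF^\circ\Ssf\Isf$. Equivalently, one strips the truth predicate using transparency (Lemma~\ref{le_PKF}(ii)) to see directly that $S$ is $\PKF^\circ$-provable, and then reapplies Proposition~\ref{prop_MainIL}; this is the ``every node is $\PKF^\circ$-derivable'' formulation. Applying this to the root $\TGD$ and to every internal node then gives the proposition.

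The step I expect to be the main obstacle is the bookkeeping for the auxiliary nodes that internalization introduces beyond the plain T-images of $\Dmc$'s nodes, especially those generated when simulating $(\forall\text{R})$, $(\forall\text{L})$ and the negated-quantifier rules: there compound formulas such as $\forall z\,T\cor{\varphi[\dot z/v]}$ occur, and I would have to verify the T-biconditional for them explicitly from $T\mathsf{all}/T\mathsf{nall}$, $T\mathsf{rp}$ and the numeral identities, so that these nodes too remain in $\KF^\circ\Ssf\Isf$. A related point is that the induction principle must be internalized with significant intermediate nodes for each pairing of Df.~\ref{pairs}: full $\lpat$-induction of $\PKF$ is replayed as $(\textsf{IND}^{\textsf{int}})$ in $\KFint$, while $(\TI^{<\varepsilon_0})$ of $\PKF^+$ is matched against the transfinite induction that $\KF$ already proves below $\varepsilon_0$. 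These matchings are exactly the ones furnished by Proposition~\ref{le_H&HNExt}, so the genuinely new work is only to check that the sequents they pass through consist of formulas satisfying the T-biconditional, whence they lie in $\KF^\circ\Ssf\Isf$ as well.
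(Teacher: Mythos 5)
Your high-level strategy (route everything through Proposition~\ref{prop_MainIL}) agrees with the paper's, but your construction of the derivation tree does not, and it has a genuine gap at exactly the point you flag as ``the main obstacle''. The central claim --- that every formula $\theta$ occurring in the internalized derivation satisfies $\KF^\circ\vdash\theta\sse T\cor{\theta}$ --- is false in general. Two concrete failures: first, simulating ($\neg$L) of \PKFcs inside \KFcs via \Cons introduces the formula $\neg T\cor{\varphi}$, for which only the direction $T\cor{\neg T\cor{\varphi}}\Then\neg T\cor{\varphi}$ is available (the converse amounts to \Comp, which \KFcs lacks); second, for the pair $(\PKF^+,\KF)$ the internalization of $(\TI^{<\varepsilon_0})$ runs through the usual jump formulas, which place $\forall z\prec y\, T\cor{\varphi(\dot{z})}$ in negative position, and the T-biconditional for the resulting compounds reduces to $T\cor{\neg\varphi}\sse\neg T\cor{\varphi}$, i.e.\ to \Cons plus \Comp, which plain \KF refutes for the Liar. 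So ``each node is significant because each of its formulas is T-transparent'' cannot be the argument; at best a side-sensitive, node-by-node verification would be required, and you do not supply it. Your fallback --- strip the truth predicate by Lemma~\ref{le_PKF}(ii) to see each node is $\PKF^\circ$-provable --- is not available either: these nodes are $\KF^\circ$-theorems, not yet $\PKF^\circ$-theorems, and the Main Lemma converts $\KF$-provability into $\PKF$-provability only for sequents of literals after the $+/-$ transformation, which does not cover nodes containing, e.g., $\forall z\, T\cor{\varphi(\dot{z})}$.

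The paper sidesteps all of this with a much simpler device. Given $\ang{\Gamma,\Delta}\in\KF^\circ\Ssf\Isf$, take an \emph{arbitrary} $\KF^\circ$-derivation of $\TGD$ and weaken every node $\Gamma_i\Then\Delta_i$ uniformly to $T\cor{\Gamma},\Gamma_i\Then\Delta_i,T\cor{\Delta}$. Since every rule carries arbitrary side formulas and the added formulas are closed (so no eigenvariable clash), the result is still a $\KF^\circ$-derivation with the same root. Each node is now a weakening of $\TGD$; since $\PKF^\circ\vdash\GD$, transparency and admissibility of weakening give $\PKF^\circ\vdash T\cor{\Gamma},\Gamma_i\Then\Delta_i,T\cor{\Delta}$, whence by Proposition~\ref{prop_MainIL} every node lies in $\KF^\circ\Ssf\Isf$. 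No analysis of which formulas occur in the derivation is needed: the nodes are significant not because of their content but because they absorb the significant root sequent. I recommend you adopt this weakening trick; your internalization idea is essentially a re-proof of Proposition~\ref{le_H&HNExt} and does not by itself deliver node-wise significance.
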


\begin{proof}
	If $\ang{\Gamma, \Delta}\in\KF^\circ\Ssf\Isf$, then by Proposition \ref{prop_MainIL} $\PKF^\circ\vdash\GD$ and hence $\PKF^\circ\vdash \TGD$. Now let $\Dmc'$ be an arbitrary \KFintst-derivation of \TGD, e.g.
	\begin{prooftree}
	\def\fCenter{\mbox{\ $\Then$\ }}
		\AxC{}
		\RightLabel{$\delta_0$}
		\branchDeduce
		\Deduce$\Gamma_3 \fCenter \Delta_3$
		\AxC{}
		\RightLabel{$\delta_1$}
		\DeduceC{$\Gamma_2 \Then \Delta_2$}
		\RL{\Rmc}
		\Bn$\Gamma_1 \fCenter \Delta_1$
		\RightLabel{$\delta_2$}
		\Deduce$T\cor\Gamma \fCenter T\cor\Delta$
	\end{prooftree}
Let $d_0\dots d_{k-1}$ be the nodes of $\Dmc'$. In order to obtain \Dmc, it suffices to replace each $d_i$ with $T\cor\Gamma\ert{d_i}T\cor\Delta$, i.e.:
	\begin{prooftree}
	\def\fCenter{\mbox{\ $\Then$\ }}
		\AxC{}
		\RightLabel{$T\cor\Gamma\ert{\delta_0}T\cor\Delta$}
		\branchDeduce
		\Deduce$T\cor\Gamma \Gamma_3 \fCenter \Delta_3, T\cor\Delta$
		\AxC{}
		\RightLabel{$T\cor\Gamma\ert{\delta_1}T\cor\Delta$}
		\DeduceC{$T\cor\Gamma, \Gamma_2 \Then \Delta_2, T\cor\Delta$}
		\RL{\Rmc}
		\Bn$T\cor\Gamma, \Gamma_1 \fCenter \Delta_1, T\cor\Delta$
		\RightLabel{$T\cor\Gamma\ert{\delta_2}T\cor\Delta$}
		\Deduce$T\cor\Gamma \fCenter T\cor\Delta$
	\end{prooftree}
Since $\KF^\circ$ is closed under weakening, \Dmc is a $\KF^\circ$-derivation of \TGD. But every node $T\cor\Gamma, \Gamma_i \Then \Delta_i, T\cor\Delta$ of \Dmc is derivable in $\PKF^\circ=\KF^\circ\Ssf\Isf$.\end{proof}
It may good to put our answer to the {\sc Generalized Reinhardt Problem} in perspective: we have shown that for every significant inference there is a way to classically derive the sequent such that every node of the derivation is itself a significant inference and hence that every node of the proof is acceptable to the significant reasoner, i.e., the non-classical logician. This does not imply that the non-classical logician can follow the classical reasoning, i.e., that the $\KF^\circ$-proof is also a $\PKF^\circ$-proof. Our result only shows that the classical reasoning is acceptable to the non-classical logician and that \KF-style theories can be used instrumentally. It does not show that one can always reason non-classically in $\KF^\circ$. But if the latter were the case, it seems that $\KF^\circ$ would deliver an independent axiomatization of its significant part in its own right. Surely---while it is certainly an interesting question whether for every $\KF^\circ$-significant inference there is a $\KF^\circ$-derivation, which is also a $\PKF^\circ$-derivation---, such a result is not required for an instrumental interpretation of $\KF^\circ$ and left for future research.


\section{Conclusion}
\label{sec:ConcludingRemarks}
In this paper we had a fresh look at Reinhardt's program and proposed  to focus on the provably true inferences of \KF-like theories rather than the provably true sentences only. We showed that if we conceive of the significant part of \KF-like theories as the set of provably true inferences then we can remain within the significant part of the theory in proving its significant inferences. This answers the {\sc Generalized Reinhardt Problem} and also shows that we need not step outside the significant part of \KF in proving theorems of the form $T\cor\varphi$, which was the content of the original formulation of {\sc Reinhardt's Problem}. From the perspective of \KF-derivations the use of the nonsignificant part of \KF is hence dispensable and in this sense an instrumentalist interpretation of \KF is certainly available. However, should we conclude that we have justified the use of nonsignificant sentences entirely within the framework of significant sentences?

One may think that to answer the latter question affirmatively an independent characterization of the significant part of \KF needs to be provided and this is precisely the content of {\sc Reinhardt's challenge}. Building on results by \cite{hahAxiomatizing,hanCosts} and \cite{nicProvably} we have shown how to provide axiomatizations of the significant part of \KF-like theories in non-classical logic. The only remaining question is whether these axiomatizations are fully independent. We take it that there is no doubt in this respect concerning the axiomatizations of the significant part of \KF-like theories with internal or restricted induction. Turning to \KF-like theories with full induction the crucial question is whether the rule ($\mathsf{TI}^{<\varepsilon_0}$) is available from within the significant framework. Ultimately, an answer to this question will depend on the role the theory of truth is supposed to play within one's theoretical framework. If, for instance, one takes the theory to play an important role in the foundations of mathematics and, for instance, to play a role in singling out the limits of predicativity \citep[cf.][]{fefReflecting}, then one should arguably refrain from thinking that ($\mathsf{TI}^{<\varepsilon_0}$) can be assumed without further justification from within the significant framework. But, to the contrary, if the theory of truth is to play no role in the foundations of mathematics and classical mathematical theorizing is freely available from within the significant framework, then it is hard to see why ($\mathsf{TI}^{<\varepsilon_0}$) should not be considered as fully justified from within the significant perspective. In this case it would seem that Reinhardt's program needs to be deemed successful. However, a proper philosophical assessment of the rule of transfinite induction up to $\varepsilon_0$ from the perspective of Reinhardt's program is beyond the scope of the paper. Nonetheless, as we hope to have established in this paper---contra \cite{hahAxiomatizing} and \cite{hanCosts}---there is no major technical obstacle preventing the success of Reinhardt's program and, in this sense, Reinhardt was certainly right in claiming that {\it ``the chances of success in this context (\dots) are somewhat better than in Hilbert's context''} \citep[p.225]{reiRemarks}.

\subsection*{Funding acknowledgement}Luca Castaldo's research was supported by the AHRC South, West and Wales Doctoral Training Partnership (SWW DTP). Johannes Stern's research is funded by the ERC Starting Grant TRUST 803684. We thank Carlo Nicolai for very helpful comments and, in particular, for pointing to a substantial simplification of our proof of Proposition \ref{prop_MainIL}.


\bibliographystyle{plainnat}
\bibliography{bib}{}


\end{document}